\crefname{hypothesis}{Hypothesis}{Hypotheses}
\crefname{fact}{Fact}{Facts}
\title{A hybrid Chebyshev-Tucker tensor format for approximation of multivariate functions\thanks{
  Submitted to the editors DATE.
  }}
\author{Peter Benner \thanks{
  Max Planck Institute for Dynamics of Complex Systems, Sandtorstr. 1, D-39106 Magdeburg, Germany (\email{benner@mpi-magdeburg.mpg.de},\email{bsun@mpi-magdeburg.mpg.de}).
  }
 \and Boris N. Khoromskij \footnotemark[2] \thanks{Max Planck Institute for
        Mathematics in the Sciences, Inselstr.~22-26, D-04103 Leipzig, Germany (\email{bokh@mis.mpg.de}, \email{vekh@mis.mpg.de})
        }
        \and Venera Khoromskaia \footnotemark[3]
        \and Bonan Sun \footnotemark[2]
        }
\newcommand{\R}{\mathbb{R}}
\newcommand{\Z}{\mathbb{Z}}
\newcommand{\e}{\mathrm{e}}
\newcommand{\dd}{\operatorname{d}}
\DeclareMathOperator{\err}{err}
\colorlet{dkgreen}{green!60!black}
\begin{document}

\maketitle

\begin{abstract}
  We introduce and analyze a mesh-free two-level hybrid Chebyshev-Tucker tensor representation for approximating multivariate functions, which combines tensor-product Chebyshev interpolation with the low-rank Tucker decomposition of the tensor of Chebyshev coefficients.
  This construction allows to avoid the expensive rank-structured grid-based approximation of function-related tensors on large spatial grids, while benefiting from the Tucker decomposition of the moderate-sized core tensor of Chebyshev coefficients.
  Thus, we can compute the nearly optimal Tucker decomposition of the 3D function with controllable accuracy $\varepsilon >0$ without discretizing the function on a full fine grid in the domain, but only using its values at a small set of Chebyshev nodes computed either from the explicit analytic expression of the target function or from its data-sparse representation in a rank-structured tensor format
  with moderate rank parameter.
  Finally, we can represent the function in the algebraic Tucker format with optimal $\varepsilon$-rank on an arbitrarily large 3D tensor grid in the computational domain by discretizing the Chebyshev polynomials on that grid.
  The rank parameters of the nonlinear Tucker-ALS decomposition of the coefficient tensor can be much smaller than the polynomial degrees of the initial Chebyshev linear interpolation in the function independent polynomial basis set.
  It is shown that our techniques can be gainfully applied to the long-range part of the singular electrostatic potential of multi-particle systems represented on a fine grid in the range-separated (RS) tensor format. The resulting low-rank tensor can be used in applications to numerical PDE, in computational quantum chemistry, as well for calculation of various physical quantities of  multi-particle systems.
  We provide error and complexity estimates and demonstrate the computational efficiency of the proposed techniques on challenging examples, including the collective electrostatic potential for large bio-molecular systems and lattice-type compounds.

\end{abstract}

\begin{keywords}
  Tensor numerical methods, Tucker and canonical tensor formats,  Chebyshev polynomial interpolation, multilinear algebra, multi-particle electrostatic potentials.
\end{keywords}

\begin{MSCcodes}
  65F30, 65F50, 65N35, 65F10
\end{MSCcodes}

\section{Introduction}
Tensor numerical methods (TNM) intrinsically simplify the numerical solution of multi-dimensional problems to one-dimensional calculations.
In recent years, TNM  are proven to be a powerful tool for solving numerically intensive multi-dimensional problems in scientific computing arising in computational quantum chemistry \cite{Khor_bookQC_2018}, multi-dimensional steady state and dynamical PDE-driven problems, data science, stochastic simulations, optimal control problems, etc., see, e.g., \cite{Bach:23,Khor-book_2018} for a detailed discussion and related references.
The benefits of TNM for $d$-dimensional problems are due to the reduction of both computational and storage complexity to merely linear scaling in $d$
based on the representation of functions and operators in low-parametric rank structured tensor formats.

TNM originated as bridging of approximation theory for multi-dimensional functions and operators from one side and the modern multilinear algebra techniques from the other side.
The construction of efficient TNM is guided by the trade-off between approximation accuracy and computational complexity of the chosen rank-structured approximation algorithms, while taking into account the compatibility of the resultant tensor representations with available multilinear algebra.
The total numerical complexity of tensor approximations is a sum of the high computational (and storage) cost for (a) the algebraic operations needed for the low-rank approximation of multi-dimensional functions and operators in the conventional canonical (CP) \cite{Hitch:27}, Tucker \cite{Tuck:1966}, and rather recent tensor train (TT) \cite{Osel_TT:11}, quantized TT (QTT) \cite{KhQuant:09} or range-separated (RS) \cite{benner2018range} tensor formats, and (b) the cost of function evaluations required for implementation of the tensor algorithms.
Thereafter, the overall approximation accuracy can be controlled by rank parameters as well as the precision of the chosen functional approximations (given the storage budget, accuracy is limited by the data regularity). A number of recent developments confirm efficiency of TNM in different research fields~\cite{Usch:16,Gavini:21,Gav:24,Schwab:22,Solo:21,Kilmer:22,MSchneid:22,Yser:24}.

TNM are usually applied to grid-based rank-structured tensor approximation of target functions and operators discretized on large $n^{\otimes d}$ spatial grids in $\mathbb{R}^d$, $d \geq 2$, for the numerical solution of large scale scientific computing problems~\cite{Khor-book_2018,Khor_bookQC_2018,RSDock:23}.
In this direction the sparse grid methods, stochastic homogenization techniques and UQ approaches also provide powerful computational tools for special classes
of problems.
Alternatively, classical mesh-free approximation methods for functions are based on their representation in problem-independent basis sets, such as trigonometric functions (Fourier), (piecewise) polynomials, sinc functions or Gaussians~\cite{Stenger:93,trefethen2019,Braess:86,mason2002chebyshev,Schwab:22,Gao:22}.
The fast multipole method provides another example of a mesh-free approach for certain kernel-based problems~\cite{Green:18}. Furthermore,
the construction of well-established hierarchical matrices ($H$-matrices) is based on the idea of low-rank approximation of matrix blocks
for matrices generated by kernel functions with point singularity, see
for example \cite{HaKh:Lin}, \cite{hackbusch2006low}. The low-rank approximation to kernel functions was also recently considered in  \cite{khan2025parametricTT,khan2025parametricHmatrix,li2025hierarchical}.
The other well-developed computational technique is based on the general principle
 of model reduction~\cite{BenFs:2024,benner2017model}. In the present paper, the term \emph{mesh-free} refers specifically to avoiding the construction and decomposition of a full fine-grid tensor in the computational domain. Depending on the available data, the method still uses function values at tensor-product Chebyshev nodes, or values reconstructed from structured grid-based or CP-formatted input.

This paper is partially motivated by recent advances in tensor techniques for 3D Chebyshev interpolation \cite{Beben:11,Kress:21,Chebfun:2014,Trefeth:2017,Kress:24}.
Chebyshev polynomials \cite{Cheb:1859} are among the most commonly used sets of interpolating functions due to many beneficial features \cite{BerTre,Ber:88,mason2002chebyshev}.
Error analysis for tensor product interpolation by Chebyshev polynomials was presented for example in \cite{HaKh:Lin}.
MATLAB implementations of the Chebyshev interpolation method up to 3D are available within the \texttt{chebfun} package~\cite{trefethen2019,Chebfun:2014,Trefeth:2017,townsend2013chebfun2,townsend2015chebfun2,townsend2014thesis,townsend2020ballfun}.
The basic version of the trivariate \texttt{chebfun3}~\cite{Trefeth:2017} is used to reduce the number of functional calls, and applies to ${\bf T} \in \R^{m_1\times m_2 \times m_3}$ represented in Tucker format, which contains the values of the input function at Chebyshev nodes, computed by using only \texttt{chebfun} for 1D polynomial interpolation.
Approximation of the target tensor ${\bf T}$ in Tucker format is performed via a variant of the heuristic method adaptive cross approximation (ACA) \cite{Beben:11} that combines ACA for the 2D slices with agglomeration of slices along the third spatial direction.
The number of functional calls in this approach is $\mathcal{O}(mr^2)$ and may be beneficial for low-rank smooth functions. Here and in the complexity discussion below, for notational simplicity we use $m := \max\{m_1,m_2,m_3\}$ and $r := \max\{r_1,r_2,r_3\}$ for the maximal polynomial degree and Tucker rank, respectively.
In contrast, the direct tensor product Chebyshev interpolant requires $m^3$ functional calls.
A modification of \texttt{chebfun3} that reduces the number of functional calls was presented in \cite{Kress:21}.

The powerful interpolatory methods face a significant challenge.
For functions with sufficient smoothness, this approach can achieve quasi-optimal accuracy with a relatively small basis set. However, for functions with low regularity or localized features, such as those with sharp gradients or singularities, achieving a satisfactory accuracy may require a very large number of interpolating functions (i.e., a high polynomial degree $m$),
which makes further use of such interpolants in scientific computing non-tractable. That is the main bottleneck for approximation of multivariate functions by Chebyshev polynomials in large scale scientific computing involving low regularity functions.

In this paper, we introduce and analyze a Chebyshev-Tucker tensor representation which combines a functional Tucker format in the Chebyshev polynomial basis with the Tucker decomposition of the tensor of Chebyshev coefficients.
This leads to a mesh-free Tucker representation of a multivariate function, either given explicitly in analytic form or available in the form of the CP/Tucker tensor format.
First, we calculate the 3D tensor of Chebyshev interpolation coefficients for the given multivariate function, thus obtaining the core tensor of the functional Tucker format. Then we compute the Tucker decomposition of this core tensor with a quasi-optimal $\varepsilon$-rank parameter. Thus we obtain the two-level Chebyshev-Tucker tensor representation with controllable accuracy, which, depending on application, can be used either in the initial functional form or easily converted into a Tucker tensor living on an arbitrarily large 3D grid.

Ultimately, this representation combines Chebyshev interpolation polynomials discretized on arbitrarily large 3D grids with the Tucker decomposition of the tensor of Chebyshev coefficients.\footnote{Using non-orthogonal interpolating functions leads to small modification of the computational scheme.}
It allows to avoid the expenses of the rank-structured approximation of  function-related tensors  defined on fine spatial grids in $\mathbb{R}^3$, while benefiting from the Tucker decomposition of the core tensor of Chebyshev coefficients, which leads to nearly optimal rank parameters as for the well established grid-based tensor methods.
The $\varepsilon$-ranks inherited from the Tucker-ALS decomposition are almost optimal and can be much smaller than the polynomial degree, $m$, of the initial Chebyshev interpolant which, in turn, is chosen independent of the grid sizes in grid-based methods.
Thus, we compute the nearly optimal Tucker decomposition of the 3D function without discretizing the function on the full grid in the computational domain, but only using its  values at Chebyshev nodes.
Note that in the case of functions with multiple cusps (or even stronger singularities), our method applies to the long-range component of the target function approximated in the range-separated (RS) tensor format \cite{benner2018range}.

We underline that, given accuracy $\varepsilon>0$, our main goal in this paper is to construct numerical methods that allow to compute the mesh-free Tucker tensor approximation of the target function with almost optimal $\varepsilon$-rank ${\bf r}=(r_1,r_2,r_3)$ which is usually much smaller than the Chebyshev polynomials degree $m$ that guarantees the accuracy $\varepsilon$ of the Chebyshev interpolant.
In turn, the parameter $m$ is supposed to be much smaller than the grid size $n$ that is required for the $\varepsilon$-accurate grid-based discretization of the input function (with the mesh-size $h$),
\begin{equation}
  |{\bf r}|\ll m \ll n = O(h^{-1}).
\end{equation}
The resulting Tucker rank is the most important parameter governing the complexity of subsequent multilinear algebra operations in respective applications.

We present numerical experiments demonstrating the efficiency of the proposed techniques on nontrivial examples, including the computation of multi-particle electrostatic potentials arising in bio-molecular modeling and lattice type structures calculations in material science.
In particular, it is gainfully applied to the long-range part of the singular many-particle electrostatic potentials in $\mathbb{R}^3$, obtained by the RS tensor format \cite{benner2018range}.
The low-rank tensor representations of the long-range part can be computed with controllable accuracy, at a cost that depends only weakly on the number of particles~\cite{benner2018range}, while the initial singular multi-particle electrostatic potential cannot be represented in the low-rank Tucker/CP formats.
In this RS setting, the input long-range tensor is represented in CP format with the CP-rank proportional to the number of particles, however, the resulting Tucker ranks after ChebTuck compression grow only logarithmically with the number of particles $N$ (see \cite[Theorem 3.1, Tables 1, 2]{benner2018range}, \cref{cor:cheb_tuck_error_cp_long_range} and \Cref{sec:logscaling}).
A more detailed discussion of the RS tensor format is beyond the scope of this paper.
Hence, the practical gain of the ChebTuck stage is to avoid the high-cost rank reduction of the CP input tensor living on the large spatial grid of size $n^{\otimes 3}$ in long-range multilinear computations and to replace direct dependence on the large univariate grid size $n$ in multilinear computations by dependence on a moderate Chebyshev degree $m$ and a compact Tucker core, which is significantly more attractive for subsequent algebraic operations.
In particular, it saves up to 98\% of the parameters in the ChebTuck representation of the long-range part of the multi-particle potential, compared to the CP representation of the same function on the full grid, while maintaining highly accurate approximations, see \Cref{tab:compression_ratio} in \Cref{sec:logscaling}.

In summary, we state the main benefits of the proposed method as follows:
\begin{itemize}
  \item {\bf Fast and accurate mesh-free decomposition:} The method yields a quasi-optimal mesh-free Tucker decomposition for both regular functions and the long-range part of functions with singularities~\cite{benner2018range}, such as the multi-particle electrostatic potential, generated by a weighted sum of 3D Newton kernels, by using a rather small number of Chebyshev polynomials.
  Notice that having at hand the low-rank representation of the long-range part of the singular multi-particle electrostatic potential, the interaction energy and forces of the system can be easily computed, see \cite{benner2018range,RSDock:23}.

  \item {\bf Efficiency for CP-formatted input:} In case of input functions presented in the canonical (CP) tensor format, our method needs only a small number of 1D Chebyshev interpolations of skeleton vectors, followed by a reduced HOSVD~\cite{khoromskij2009multigrid,khoromskaia2022ubiquitous} of the Chebyshev coefficient tensor, thus resulting in a complexity that scales linearly in the degree $m$ of the Chebyshev polynomials. A similar complexity scaling can be achieved in the case of Tucker-formatted input.
  \item {\bf Compatibility with grid-based methods:} By discretizing the Chebyshev polynomials in the final ChebTuck format, one obtains a standard low-rank Tucker tensor defined on any desired fine grid. This avoids the direct, and often prohibitively expensive, decomposition of a large grid-based tensor in $\mathbb{R}^{n \times n \times n}$, while reproducing the results of well-established grid-based tensor numerical methods~\cite{khoromskij2009multigrid,khoromskaia2014grid}.
\end{itemize}

The remainder of the paper is organized as follows.
In \Cref{sec:HybridChebTuck}, we introduce the Chebyshev-Tucker representation and collect relevant notations.
\Cref{sec:sect3} presents the numerical schemes for constructing the approximation of trivariate functions, either given explicitly in analytical form or only available on a regular grid, and provides corresponding error and complexity analysis.
\Cref{sec:app_multi_part_mod}  first outlines the well-developed techniques for grid-based low-rank tensor representations of singular multi-particle interaction potentials, see \cite{RSDock:23,benner2018range}. We then discuss in detail the Chebyshev-Tucker approximation of trivariate functions with singularities (on the example of the classical Newton kernel). We demonstrate the efficiency of our methods for  approximating the long-range part of the multi-particle electrostatic potentials of bio-molecular systems and lattice-type compounds of different sizes, and present various numerical tests illustrating the asymptotic performance of the proposed approach.
Finally, \Cref{sec:conclude} concludes the paper and outlines potential future research directions.

\section{The mesh-free hybrid Chebychev-Tucker tensor representation}\label{sec:HybridChebTuck}

In this section, we review the tensor notation needed throughout the paper and introduce the Chebyshev-Tucker tensor format for the approximation of multivariate functions.

Recall that a tensor of order $d$ is defined as a real multi-dimensional array over a $d$-tuple index set ${\bf A} = [a_{i_1, \cdots, i_d}] \equiv [a(i_1, \cdots, i_d)] \in \mathbb{R}^{n_1 \times \cdots \times n_d},$
with multi-index notation ${\bf i}= (i_1, \cdots, i_d)$, $i_{\ell} = 1, \cdots, n_{\ell}$.
To get rid of the exponential scaling in storage size, one can apply the rank-structured formatted separable approximations of tensors. For example, the \emph{$R$-term canonical (CP) tensor format} is defined by a finite sum of rank-1 tensors
\begin{equation}\label{eqn:canonical}
  {\bf U}_R =  {\sum}_{k =1}^{R} \xi_k {\bf u}_k^{(1)}  \otimes \cdots \otimes {\bf u}_k^{(d)},
  \quad \xi_k \in \mathbb{R}_+,
\end{equation}
where $R \in \mathbb{N}_+$ is the canonical rank, $\xi_k \in \mathbb{R}_+$ are positive weights,
and ${\bf u}_k^{(\ell)} \in \mathbb{R}^{n_{\ell}}$ are known as the canonical (skeleton) vectors.
The rank-${\bf r}$, ${\bf r}=(r_1,\cdots,r_d)$, \emph{orthogonal Tucker format} is specified as  the  set of tensors ${\bf V}\in \mathbb{R}^{n_1 \times \cdots \times n_d}$ parametrized as
\begin{equation}\label{eqn:Tucker}
  {\bf V} =  \sum_{\nu_1 =1}^{r_1} \cdots \sum_{\nu_d =1}^{r_d} \beta_{\nu_1, \cdots, \nu_d}
  {\bf v}_{\nu_1}^{(1)}  \otimes \cdots \otimes {\bf v}_{\nu_d}^{(d)}  \equiv
  \boldsymbol{\beta} \times_1 V^{(1)} \times_2 V^{(2)}\cdots \times_d V^{(d)},
\end{equation}
where $\{ {\bf v}_{\nu_{\ell}}^{(\ell)}\}_{\nu_{\ell} = 1}^{r_{\ell}} \in \mathbb{R}^{n_{\ell}}$ is a set of orthonormal vectors for $\ell = 1, \cdots, d$.
Here, $\times_{\ell}$ denotes the contraction along the $\ell$-th mode of the Tucker core $\boldsymbol{\beta} \in \mathbb{R}^{r_1 \times \cdots \times r_d}$ with the second mode of the orthogonal matrices $V^{(\ell)} = [{\bf v}_1^{(\ell)}, \cdots, {\bf v}_{r_{\ell}}^{(\ell)}] \in \mathbb{R}^{n_{\ell}  \times r_{\ell}}$. Here and after we consider $d=3$ to simplify the presentation since (1) the extension to higher dimensions is straightforward, and (2) Tucker is not the recommended format for high dimensions due to the exponential scaling of the storage of the core tensor.

Furthermore, let $T_k(x) := \cos((k-1) \arccos(x))$ denote the $(k-1)$-st Chebyshev polynomial of the first kind, which forms a sequence of orthogonal polynomials on $[-1,1]$ with respect to the weight $w(x) = 1/\sqrt{1-x^2}$.
We use these polynomials as a fixed tensor-product basis and compress the corresponding coefficient tensor in Tucker format.
We call the resulting representation the Chebyshev-Tucker format, shortly ChebTuck format.
\begin{definition}[ChebTuck format]\label{def:ChebTuck}
  Let $T_{1:k}(x) = (T_1(x), \cdots, T_{k}(x))$ denote the vector-valued function of the first $k$ Chebyshev polynomials. We say that the real-valued function $f(x_1,x_2,x_3)$ defined on $[-1,1]^3$ is represented in the {\bf ChebTuck-$(\bf{m}, \bf{r})$ format} if it allows the factorization
  \begin{equation}\label{eq:cheb_tuck_def}
    f(x_1, x_2, x_3) = {\bf C} \times_1 T_{1:m_1}(x_1) \times_2 T_{1:m_2}(x_2) \times_3 T_{1:m_3}(x_3),
  \end{equation}
  where ${\bf C} \in \mathbb{R}^{m_1 \times m_2 \times m_3}$ is a rank-$\bf{r}$ Tucker tensor given by
  \begin{equation}\label{eq:cheb_tuck_Ccoef}
    {\bf C} = \boldsymbol{\beta} \times_1 V^{(1)} \times_2 V^{(2)} \times_3 V^{(3)}, \quad \boldsymbol{\beta} \in \mathbb{R}^{r_1 \times r_2 \times r_3}, \quad V^{(\ell)} \in \mathbb{R}^{m_{\ell} \times r_{\ell}}.
  \end{equation}
\end{definition}

Equivalently, the ChebTuck representation may be written in functional Tucker form as
\begin{equation}\label{eqn:cheb_func_tuck}
  f(x_1,x_2,x_3) = \boldsymbol{\beta} \times_1 v^{(1)}(x_1) \times_2 v^{(2)}(x_2) \times_3 v^{(3)}(x_3),
\end{equation}
where $v^{(\ell)}(x_\ell) = T_{1:m_\ell}(x_\ell) V^{(\ell)} \in \R^{r_\ell}$, that is, each entry of $v^{(\ell)}$ is a linear combination of Chebyshev polynomials. Hence, by discretizing the basis functions on any target grid, the final ChebTuck approximation can be converted to a standard algebraic Tucker tensor on that grid. If needed, this Tucker tensor can then be transformed further to canonical form by the standard Tuck-to-can procedure; see \cite[Def. 2.3, (2.6)]{khoromskij2009multigrid}, \cite[Rem. 2.7]{khoromskij2007low}, and \cite{Khor_bookQC_2018}.

We can see that the ChebTuck format is a degree ${\bf m}=(m_1,m_2,m_3)$ polynomial in the Chebyshev basis, where the coefficients are given as a Tucker tensor. For this reason, we sometimes also write a subscript ${\bf m}$ to denote the ChebTuck format of certain degrees $f_{\bf m}$.

\section{Numerical methods for hybrid Chebyshev-Tucker approximation of trivariate functions}\label{sec:sect3}

In what follows, we introduce the numerical methods for rank-structured approximation of trivariate functions in the ChebTuck format introduced in Definition \ref{def:ChebTuck}. The basic idea is to first compute the Chebyshev coefficient tensor ${\bf C}$, either in full tensor format or in some low-rank format, and then convert ${\bf C}$ to the desired Tucker format with quasi-optimal $\varepsilon$-rank. Our techniques can be applied directly to multivariate functions with moderate regularity for which the Chebyshev tensor product interpolation demonstrates satisfactory convergence rates. Furthermore, we extend the presented approach beyond the class of regular functions and successfully apply it to functions with multiple singularities by using the range-separated tensor decomposition.

\subsection{Classical multivariate Chebyshev interpolant}
\label{sec:ChebTuck}
\subsubsection{Functional case}
Given $f: [-1,1]^3 \to \mathbb{R}$, we consider its approximation by the tensor product of Chebyshev polynomials in the form (c.f. \cref{eq:cheb_tuck_def})
\begin{equation}\label{eq:3d_cheb}
  f(x_1,x_2,x_3) \approx \tilde{f}_{\bf m}(x_1,x_2,x_3) = \sum_{i_1=1}^{m_1} \sum_{i_2=1}^{m_2} \sum_{i_3=1}^{m_3} {\bf C}_{i_1,i_2,i_3} T_{i_1}(x_1) T_{i_2}(x_2) T_{i_3}(x_3),
\end{equation}
where $T_k(x)$ is the $(k-1)$-st Chebyshev polynomial, ${\bf C} \in \R^{m_1 \times m_2 \times m_3}$ is the Chebyshev coefficient tensor (CCT).

There are mainly two ways to interpret the approximant $\tilde{f}_{\bf m}$, which lead to two different methods to compute the CCT ${\bf C}$. The first way is to take $\tilde{f}_{\bf m}$ as a \emph{truncation} of the tensor product Chebyshev series expansion of $f$ and compute the coefficients by projection and integration. In this case, the Chebyshev coefficients are in fact straightforward to derive and given by the $w$-weighted inner product of $f$ and the corresponding Chebyshev basis, i.e., ${\bf C}_{i_1,i_2,i_3} = \int_{[-1,1]^3} f(\mathbf{x}) T_{i_1}(x_1) T_{i_2}(x_2) T_{i_3}(x_3)w(\mathbf{x}) \dd \mathbf{x}$, where $w(\mathbf{x}) = \prod_{\ell=1}^3 (1-x_\ell^2)^{-1/2}$ is the Chebyshev weight function,
see e.g., \cite{mason1980near,mason1982minimal}, but the computation is troublesome since one needs to evaluate those 3D integrals. Here, we used the notation $\mathbf{x} = (x_1,x_2,x_3)$.
The second way is to take $\tilde{f}_{\bf m}$ as an \emph{interpolation} of $f$ at Chebyshev nodes, namely, $\tilde{f}_{\bf m}$ satisfies the interpolatory property $\tilde{f}_{\bf m}(s_{i_1}^{(1)}, s_{i_2}^{(2)}, s_{i_3}^{(3)}) = f(s_{i_1}^{(1)}, s_{i_2}^{(2)}, s_{i_3}^{(3)})$ for all $i_\ell = 1, \ldots, m_\ell$, $\ell = 1,2,3$, where $s_{i_\ell}^{(\ell)}$ are the Chebyshev nodes of the second kind on each mode given by $s_{i_\ell}^{(\ell)} = \cos\left((i_\ell-1)\pi/(m_\ell -1)\right)$.
The derivation of the CCT ${\bf C}$ in this case is more involved while it turns out that the computation can be done efficiently by the discrete cosine transform (DCT). The accuracy of these two approaches is in practice similar \cite{mason1980near} and we will therefore focus on the interpolation approach in this paper.

For this purpose, we first define the function evaluation tensor ${\bf T} \in \R^{m_1 \times m_2 \times m_3}$ of $f$ at Chebyshev nodes as
\begin{equation}\label{eq:3d_eval_tensor}
  {\bf T}_{i_1,i_2,i_3} = f(s_{i_1}^{(1)}, s_{i_2}^{(2)}, s_{i_3}^{(3)}), \quad i_\ell = 1, \ldots, m_\ell, \quad \ell = 1,2,3
\end{equation}
and the inverse DCT matrix $W^{(\ell)} \in \R^{m_\ell \times m_\ell}$ as
\begin{equation}\label{eq:dct_matrix}
  W^{(\ell)} = \dfrac{2}{m_\ell-1}\begin{pmatrix}
    \frac{1}{4} T_1(s_1)        & \frac{1}{2} T_1(s_2)        & \frac{1}{2} T_1(s_3)        & \dots  & \frac{1}{4} T_{1}(s_{m_\ell})      \\
    \frac{1}{2} T_2(s_1)        & T_2(s_2)                    & T_2(s_3)                    & \dots  & \frac{1}{2} T_{2}(s_{m_\ell})      \\
    \vdots                      & \vdots                      & \vdots                      & \ddots & \vdots                             \\
    \frac{1}{4} T_{m_\ell}(s_1) & \frac{1}{2} T_{m_\ell}(s_2) & \frac{1}{2} T_{m_\ell}(s_3) & \dots  & \frac{1}{4} T_{m_\ell}(s_{m_\ell})
  \end{pmatrix},
\end{equation}
then by \cite[Equations 6.27---6.28]{mason2002chebyshev}, the CCT ${\bf C}$ can be computed by the DCT of the function evaluation tensor ${\bf T}$ as
\begin{equation}\label{eq:3d_cheb_coeff}
  {\bf C} = {\bf T} \times_1 W^{(1)} \times_2 W^{(2)} \times_3 W^{(3)}.
\end{equation}

Given an error tolerance $\varepsilon > 0$, we can approximate the coefficient tensor ${\bf C}$ by a Tucker tensor with almost optimal rank parameter ${\bf r}=(r_1,r_2,r_3)$ using the Tucker-ALS algorithm denoted by $\texttt{tuck\_als}({\bf C}, \varepsilon)$
\begin{equation}\label{eq:tucker_of_cheb_coeff}
  {\bf C} \approx \hat{\bf C} = \boldsymbol{\beta} \times_1 V^{(1)} \times_2 V^{(2)} \times_3 V^{(3)}, \quad [\boldsymbol{\beta}, V^{(1)}, V^{(2)}, V^{(3)}] = \texttt{tuck\_als}({\bf C}, \varepsilon)
\end{equation}
with $\boldsymbol{\beta} \in \R^{r_1 \times r_2 \times r_3}$ and $V^{(\ell)} \in \R^{m_\ell \times r_\ell}$, $\ell = 1,2,3$.
See \cref{rem:tucker_alg} for the discussions on Tucker-ALS and other Tucker decomposition algorithms.
Substituting ${\bf C}$ in \cref{eq:3d_cheb} by its Tucker format \cref{eq:tucker_of_cheb_coeff} gives the ChebTuck approximation of $f$, denoted by $\hat{f}_{\bf m}$. By \cref{eqn:cheb_func_tuck}, the ChebTuck approximation of $f$ can be written as
\begin{equation}\label{eq:cheb_tuck_func}
  f(\textbf{x}) \approx \tilde{f}_{\bf m}(x_1,x_2,x_3) \approx \hat{f}_{\bf m}(x_1,x_2,x_3) = \boldsymbol{\beta} \times_1 v^{(1)}(x_1) \times_2 v^{(2)}(x_2) \times_3 v^{(3)}(x_3).
\end{equation}
where $v^{(\ell)}_{j_\ell}(x_\ell) = \sum_{i_\ell = 1}^{m_\ell} V^{(\ell)}_{i_\ell,j_\ell} T_{i_\ell}(x_\ell)$ for $j_\ell = 1, \ldots, r_\ell$, $\ell = 1,2,3$.

In summary, this leads to the following straightforward \cref{alg:cheb_tuck_func} for constructing the Chebyshev-Tucker approximation of a function $f: [-1,1]^3 \to \mathbb{R}$.

\begin{algorithm}[tbh]
  \caption{ChebTuck Approximation: functional case}\label{alg:cheb_tuck_func}
  \begin{algorithmic}[1]
    \Require Given a function $f: [-1,1]^3 \to \mathbb{R}$, the Chebyshev degrees ${\bf m} = (m_1, m_2, m_3)$ and \texttt{tuck\_als} tolerance $\varepsilon > 0$.
    \Ensure The ChebTuck format $\hat{f}_{\bf m}$ of $f$.
    \State Compute the function evaluation tensor ${\bf T}$ by \cref{eq:3d_eval_tensor}, then the CCT ${\bf C}$ by \cref{eq:3d_cheb_coeff}. \label{line:cheb_coeff}
    \State Compute the Tucker decomposition: $[\boldsymbol{\beta}, V^{(1)}, V^{(2)}, V^{(3)}] = \texttt{tuck\_als}({\bf C}, \varepsilon)$.\label{line:tucker_decomp}
    \State Form the factor functions $v^{(\ell)}_{j_\ell}(x_\ell) = \sum_{i_\ell = 1}^{m_\ell} V^{(\ell)}_{i_\ell,j_\ell} T_{i_\ell}(x_\ell)$ for $\ell = 1,2,3$.\label{line:factor_functions}
    \State Return $\hat{f}_{\bf m}(x_1,x_2,x_3) = \boldsymbol{\beta} \times_1 v^{(1)}(x_1) \times_2 v^{(2)}(x_2) \times_3 v^{(3)}(x_3)$. \label{line:return_cheb_tuck}
  \end{algorithmic}
\end{algorithm}
\begin{remark}\label{rem:tucker_alg}
  Notice that $\texttt{tuck\_als}$ in line~\ref{line:tucker_decomp} of \cref{alg:cheb_tuck_func} implementing the classical
  Tucker-ALS algorithm \cite{de2000multilinear,de2000best} for this purpose, is widely used in practice for moderate $d$ and $m$. It enjoys strong theoretical guarantees such that for any given $\varepsilon > 0$, it approximates the original tensor in the sense of relative error $\|{\bf C} - \hat{\bf C}\|_{\mathrm{F}} \leq \varepsilon\|{\bf C}\|_{\mathrm{F}}$ with almost optimal rank parameters, where $\| \cdot \|_{\mathrm{F}}$ is the Frobenius norm of a tensor. However, it is computational expensive since it requires the access to all entries of the tensor ${\bf C}$, amounting to $O(m^3)$ space complexity and successive SVDs of the unfolding matrices, amounting to $O(m^4)$ algebraic operations.
  Alternatives to the classical Tucker-ALS are, e.g., the multigrid algorithm \cite{khoromskij2009multigrid}, cross-based algorithms \cite{oseledets2008tucker,DS:2014,Kress:21,rakhuba2015fast,Trefeth:2017}, or randomized algorithms \cite{che2019randomized,kressner2023streaming}. In case of the relatively small rank of the input tensor, those algorithms are more efficient in terms of computational cost and memory usage. In particular, using the cross-based algorithms can avoid the evaluation of the full tensor ${\bf T}$. However, they contain several heuristics, especially the cross-based algorithms have no theoretical accuracy guarantees for given $\varepsilon$. The discussions on the difference and choice of Tucker decomposition algorithms are beyond the scope of this paper. We use the classical Tucker-ALS algorithm in our theoretical analysis and numerical experiments demonstrated reliable
  efficiency for low-rank approximation of the coefficient tensor ${\bf C}$.
\end{remark}

\subsubsection{Algebraic case with full tensor input}

In many applications, see \cite{Khor-book_2018,Khor_bookQC_2018,RSDock:23} among others, including ours on multi-particle modeling which will be discussed in \Cref{sec:app_multi_part_mod}, the function $f$ may not be given explicitly in the full domain $[-1,1]^3$,  but only the data on the equi-spaced grid are available.
Specifically, we only have access to the function values at uniform grid points collected in a tensor ${\bf F} \in \mathbb{R}^{n_1 \times n_2 \times n_3}$, ${\bf F}_{i_1,i_2,i_3} = f(t_{i_1}^{(1)}, t_{i_2}^{(2)}, t_{i_3}^{(3)})$ for $i_\ell = 1, \ldots, n_\ell$, $\ell = 1,2,3$. Here $t_{i_\ell}^{(\ell)} = -1 + (i_\ell-1) h_\ell$ is the grid point and $h_\ell = 2/n_\ell$ is the grid size.

In this case, we first extrapolate the function to the full domain $[-1,1]^3$ by constructing a multivariate cubic spline interpolation $Q(x_1,x_2,x_3)$ of the data ${\bf F}$ on the uniform grid, i.e., $Q(x_1,x_2,x_3)$ is a $C^2$ piecewise cubic polynomial satisfying the interpolatory property $Q(t_{i_1}^{(1)}, t_{i_2}^{(2)}, t_{i_3}^{(3)}) = {\bf F}_{i_1,i_2,i_3}$, and then call \cref{alg:cheb_tuck_func} with $f = Q$. This procedure is summarized in \cref{alg:cheb_tuck_alg}. Note that here and forehand, we assume $n_\ell \geq 4$ naturally to ensure the existence of the cubic spline.

\begin{algorithm}[tbh]
  \caption{ChebTuck Approximation: algebraic case}\label{alg:cheb_tuck_alg}
  \begin{algorithmic}[1]
    \Require Given a tensor ${\bf F}\in \mathbb{R}^{n_1 \times n_2 \times n_3}$ and $\mathbf{m}$, $\varepsilon > 0$ as in \cref{alg:cheb_tuck_func}.
    \Ensure The ChebTuck format $\hat{f}_{\bf m}$ of $f$.
    \State Construct the multivariate cubic spline interpolation $Q(x_1,x_2,x_3)$ of the data ${\bf F}$ on the uniform grid by, e.g., \texttt{interpn} in MATLAB.
    \State Call \cref{alg:cheb_tuck_func} with $f = Q$.
  \end{algorithmic}
\end{algorithm}

\subsubsection{Algebraic case with CP tensor input}\label{ssec:alg_cp_input}
In fact, the multivariate cubic spline interpolation, see e.g. \cite[Chapter 17]{de1978practical}, is computationally expensive. However, in many applications of interest, e.g., quantum chemistry \cite{Khor_bookQC_2018}, multi-particle modeling \cite{Khor-book_2018} and the discretization of certain differential operators \cite{kazeev2013low}, the target input tensor ${\bf F}$ has some inherent low-rank structure and thus can be cheaply represented in some low-rank tensor format. In such cases, the computation of the evaluation tensor ${\bf T}$ can be done more efficiently. In particular, we consider here the case where ${\bf F}$ is given in the CP format, i.e.,
\begin{equation}\label{eq:func_val_cp}
  {\bf F} = \sum_{k=1}^{R} \xi_k {\bf a}_k^{(1)} \otimes {\bf a}_k^{(2)} \otimes {\bf a}_k^{(3)},
\end{equation}
where ${\bf a}_k^{(\ell)} \in \R^{n_\ell}$ are the canonical vectors of each mode. In this case, we only need to construct \emph{univariate} cubic splines of the canonical vectors ${\bf a}_k^{(\ell)}$ in each mode, and make tensor products and summations of them. Specifically, let $q_k^{(\ell)}(x_\ell)$ be the cubic spline of the data ${\bf a}_k^{(\ell)}$ on the uniform grid $\{ t_{i_\ell}^{(\ell)} \}_{i_\ell=1}^{n_\ell}$, i.e., $q_k^{(\ell)}(t_{i_\ell}^{(\ell)}) = {\bf a}_k^{(\ell)}(i_\ell)$ and then construct the sum of the tensor product of the univariate splines as
\begin{equation}\label{eq:cp_2_spline}
  Q(x_1,x_2,x_3) = {\sum}_{k=1}^{R}\xi_k q_k^{(1)}(x_1) q_k^{(2)}(x_2) q_k^{(3)}(x_3).
\end{equation}
Calling \cref{alg:cheb_tuck_func} with $f = Q$, one sees immediately from the definitions of the CCT ${\bf C}$ \cref{eq:3d_eval_tensor,eq:3d_cheb_coeff} that ${\bf C}$ computed in line~\ref{line:cheb_coeff} is also in CP format. Specifically, let ${\bf q}_k^{(\ell)} \in \R^{m_\ell}$ contain the function evaluations of the univariate spline $q_k^{(\ell)}(x)$ at $\{ s_{i_\ell}^{(\ell)} \}_{i_\ell=1}^{m_\ell}$, i.e., ${\bf q}_k^{(\ell)}(i_\ell) = q_k^{(\ell)}(s_{i_\ell}^{(\ell)})$, then
by definition of the function evaluation tensor ${\bf T}$ in \cref{eq:3d_eval_tensor} and the CCT ${\bf C}$ in \cref{eq:3d_cheb_coeff}, we have
\begin{equation}\label{eq:cheb_coeff_cp}
  {\bf T} = {\sum}_{k=1}^{R} \xi_k {\bf q}_k^{(1)} \otimes {\bf q}_k^{(2)} \otimes {\bf q}_k^{(3)} \implies {\bf C} = {\sum}_{k=1}^{R} \xi_k {\bf c}_k^{(1)} \otimes {\bf c}_k^{(2)} \otimes {\bf c}_k^{(3)},
\end{equation}
with ${\bf c}_k^{(\ell)} = W^{(\ell)} {\bf q}_k^{(\ell)}$ for $\ell = 1,2,3$ and $W^{(\ell)}$ being the inverse DCT matrix defined in \cref{eq:dct_matrix}.
From here, we can leverage the reduced HOSVD (\texttt{rhosvd}) \cite{khoromskij2009multigrid} method instead of full \texttt{tuck\_als} to compress ${\bf C}$ directly from canonical to Tucker format, without forming the full tensor ${\bf C}$. The rest of the algorithm remains the same as Algorithm \ref{alg:cheb_tuck_func}. This procedure is summarized in Algorithm \ref{alg:cheb_tuck_alg_cp}.
\begin{algorithm}[tbh]
  \caption{ChebTuck Approximation: algebraic case (CP tensor input)}
  \label{alg:cheb_tuck_alg_cp}
  \begin{algorithmic}[1]
    \Require Given a tensor ${\bf F}\in \mathbb{R}^{n_1 \times n_2 \times n_3}$ in CP format, the Chebyshev degrees ${\bf m} = (m_1, m_2, m_3)$ and \texttt{rhosvd} tolerance $\varepsilon > 0$.
    \Ensure The ChebTuck format $\hat{f}_{\bf m}$ of $f$.
    \State Compute the splines $q_k^{(\ell)}(x_\ell)$ and the evaluations ${\bf q}_k^{(\ell)}$ for $k = 1, \ldots, R$, $\ell = 1,2,3$.\label{line:spline_eval}
    \State Compute the canonical vectors ${\bf c}_k^{(\ell)} = W^{(\ell)} {\bf q}_k^{(\ell)}$ of ${\bf C}$ for $k = 1, \ldots, R$, $\ell = 1,2,3$.\label{line:cheb_coeff_cp}
    \State Compute the reduced Tucker decomposition: $[\boldsymbol{\beta}, V^{(1)}, V^{(2)}, V^{(3)}] = \texttt{rhosvd}({\bf C}, \varepsilon)$
    \State Perform lines \ref{line:factor_functions}-\ref{line:return_cheb_tuck} in Algorithm \ref{alg:cheb_tuck_func}.
  \end{algorithmic}
\end{algorithm}

\subsection{Error bounds for the Chebyshev-Tucker approximation}\label{sec:func_cp_cheb}
In this section, we always assume $m = m_1 = m_2 = m_3$ without loss of generality to simplify the notation. While we state the results for general tensor order $d$ since they also hold for $d \geq 3$, the proofs specialize to $d = 3$ for clarity of presentation.

\subsubsection{Functional case}
We first consider the ChebTuck approximation $\hat{f}_{\bf m}$ of a function $f$ obtained by Algorithm \ref{alg:cheb_tuck_func}, i.e. the functional case. It is well-known that multivariate Chebyshev interpolation $\tilde{f}_{\bf m}$ converges exponentially fast to the function $f$ in the uniform norm over $[-1,1]^3$ with respect to the degree ${\bf m}$ of the Chebyshev polynomials, provided that $f$ has sufficient regularity. Our ChebTuck format introduces an additional error due to the Tucker approximation of the Chebyshev coefficient tensor ${\bf C}$, which is controlled by the Tucker-ALS truncation error $\varepsilon$. In the following proposition, we show that the ChebTuck approximation $\hat{f}_{\bf m}$ is almost as accurate as the full Chebyshev interpolation $\tilde{f}_{\bf m}$ provided that the Tucker approximation error $\varepsilon$ is sufficiently small.

\begin{proposition}\label{prop:cheb_tuck_error}
  Given notations developed above, we have
  \begin{equation}
    \|f - \hat{f}_{\bf m}\|_{\infty} \leq \| f - \tilde{f}_{\bf m} \|_{\infty} + m^{d/2} \varepsilon \| \bf{C} \|_{\mathrm{F}},
  \end{equation}
  where $\| \cdot \|_{\infty}$ is the uniform norm over $[-1,1]^3$.
\end{proposition}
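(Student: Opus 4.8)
The plan is to use the triangle inequality to split the total error into the classical Chebyshev interpolation error and the perturbation introduced by replacing the exact coefficient tensor ${\bf C}$ by its Tucker approximation $\hat{\bf C}$. That is, I would write
\begin{equation*}
  \|f - \hat f_{\bf m}\|_\infty \le \|f - \tilde f_{\bf m}\|_\infty + \|\tilde f_{\bf m} - \hat f_{\bf m}\|_\infty,
\end{equation*}
so that everything reduces to bounding the second term. By construction (see \cref{eq:3d_cheb} and \cref{eq:cheb_tuck_func}), $\tilde f_{\bf m} - \hat f_{\bf m}$ is the trivariate polynomial whose Chebyshev coefficient tensor is exactly the error tensor $E := {\bf C} - \hat{\bf C}$, i.e.
\begin{equation*}
  \tilde f_{\bf m}(\mathbf{x}) - \hat f_{\bf m}(\mathbf{x}) = \sum_{i_1,i_2,i_3} E_{i_1,i_2,i_3}\, T_{i_1}(x_1) T_{i_2}(x_2) T_{i_3}(x_3).
\end{equation*}

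The second step is a pointwise bound. Since $|T_k(x)| \le 1$ for all $x \in [-1,1]$ and all $k$, for any $\mathbf{x} \in [-1,1]^3$ one has
\begin{equation*}
  |\tilde f_{\bf m}(\mathbf{x}) - \hat f_{\bf m}(\mathbf{x})| \le \sum_{i_1,i_2,i_3} |E_{i_1,i_2,i_3}| = \|E\|_1,
\end{equation*}
the entrywise $\ell_1$ norm of the error tensor. Then I would apply Cauchy--Schwarz over the $m^d$ index tuples to pass from the $\ell_1$ norm to the Frobenius ($\ell_2$) norm: $\|E\|_1 \le m^{d/2}\,\|E\|_{\mathrm{F}}$. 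Finally, the Tucker-ALS guarantee recalled in \cref{rem:tucker_alg} gives the relative-error bound $\|E\|_{\mathrm{F}} = \|{\bf C} - \hat{\bf C}\|_{\mathrm{F}} \le \varepsilon\,\|{\bf C}\|_{\mathrm{F}}$. Chaining these three inequalities yields $\|\tilde f_{\bf m} - \hat f_{\bf m}\|_\infty \le m^{d/2}\varepsilon\,\|{\bf C}\|_{\mathrm{F}}$, which combined with the triangle inequality is exactly the claimed estimate.

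There is no serious obstacle here; the only mildly delicate point is making sure the combinatorial factor is accounted for honestly. The number of Chebyshev coefficients in $d$ dimensions with $m$ per mode is $m^d$, so Cauchy--Schwarz produces $\sqrt{m^d} = m^{d/2}$, matching the statement. One should note in passing that this bound is deliberately crude — it does not exploit the decay of the Chebyshev coefficients, so in practice the factor $m^{d/2}$ is pessimistic — but for the purpose of showing that $\hat f_{\bf m}$ is almost as accurate as $\tilde f_{\bf m}$ whenever $\varepsilon$ is chosen small enough relative to $m^{-d/2}$, it suffices. I would present the three inequalities in sequence (triangle inequality, $\sup|T_k|\le 1$ together with Cauchy--Schwarz, and the Tucker-ALS accuracy guarantee) and conclude.
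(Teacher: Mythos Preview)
Your proposal is correct and follows essentially the same argument as the paper: triangle inequality, bound the Chebyshev polynomials by $1$, apply Cauchy--Schwarz to pass from the entrywise $\ell_1$ to the Frobenius norm picking up the factor $m^{d/2}$, and invoke the Tucker-ALS relative error guarantee $\|{\bf C}-\hat{\bf C}\|_{\mathrm{F}}\le \varepsilon\|{\bf C}\|_{\mathrm{F}}$. The paper presents these same three ingredients in the same order.
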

\begin{proof}
  By triangle inequality, it suffices to prove $\| \hat{f}_{\bf m} - \tilde{f}_{\bf m} \|_{\infty} \leq m^{3/2} \varepsilon \| \bf{C} \|_{\mathrm{F}}$.
  Indeed, by the definition of the ChebTuck format \cref{eq:cheb_tuck_func} and the fact that Chebyshev polynomials are bounded by 1, we have
  \begin{equation*}
    \begin{aligned}
      \| \tilde{f}_{\bf m} - \hat{f}_{\bf m} \|_{\infty} & = \left\| \sum_{i_1=1}^{m} \sum_{i_2=1}^{m} \sum_{i_3=1}^{m} ({\bf C}_{i_1,i_2,i_3} - \hat{\bf C}_{i_1,i_2,i_3}) T_{i_1}(x_1) T_{i_2}(x_2) T_{i_3}(x_3) \right\|_{\infty}                                               \\
                                                         & \leq \sum_{i_1=1}^{m} \sum_{i_2=1}^{m} \sum_{i_3=1}^{m} |{\bf C}_{i_1,i_2,i_3} - \hat{\bf C}_{i_1,i_2,i_3}| \leq m^{3/2} \| {\bf C} - \hat{\bf{C}} \|_{\mathrm{F}} \leq m^{3/2} \varepsilon \| {\bf C} \|_{\mathrm{F}},
    \end{aligned}
  \end{equation*}
  where we used the Cauchy-Schwarz inequality and the relative truncation error $\varepsilon$ of the Tucker-ALS approximation in the last two inequalities.
\end{proof}
\subsubsection{Algebraic case}\label{ssec:alg_cp_error}
Now, we consider the ChebTuck approximation in the algebraic case, i.e., given a tensor ${\bf F} \in \R^{n_1 \times n_2 \times n_3}$ assumed to discretize a black-box function $f$ on the uniform grid.
In this case, the metric for the error of the approximation is the difference between the ChebTuck approximation $\hat{f}_{\bf m}$ evaluated at the uniform grid and the tensor ${\bf F}$, i.e., we define
\begin{equation}\label{eq:error_metric}
  \err(\hat{f}_{\bf m}, {\bf F}) := \max_{i_1,i_2,i_3} \left| \hat{f}_{\bf m}(t_{i_1}^{(1)}, t_{i_2}^{(2)}, t_{i_3}^{(3)}) - {\bf F}_{i_1,i_2,i_3} \right|.
\end{equation}
In other words, we hope to recover the original input ${\bf F}$ when we evaluate the ChebTuck format $\hat{f}_{\bf m}$ at the uniform grid. Since we know nothing about the function $f$ except for the values on the grid, for \cref{alg:cheb_tuck_alg}, we can say nothing about the error of the Chebyshev interpolation of the multivariate cubic spline $Q$, thus neither the ChebTuck approximation $\hat{f}_{\bf m}$. For example, for highly non-regular function $f$
we would not expect the Chebyshev interpolation thus also the ChebTuck approximation to be accurate.

\begin{remark}\label{rem:algebr-apprther}
In this paper, we assume that ChebTuck format serves for approximating regular functions which admit accurate
Chebyshev interpolation with a moderate polynomial degree $m$.
 Then the total error of the ChebFun approximation is basically determined by two main ingredients, that is
 the error of the cubic spline interpolation over the grid data given on a uniform mesh and the accuracy of the Chebyshev polynomial
 interpolation based on slightly perturbed functional values
 at the Chebyshev interpolation nodes. The first point is an issue of classical approximation theory of spline interpolation of functions of several variables and there are standard error estimates in terms of mesh-size $h$ and bounds on the respective derivatives of target function given on a grid (see Tables 1 - 3 demonstrating the convergence rate for both regular and non-regular functions). Concerning the second issue, there is the classical approximation theory via polynomials for different classes of regular functions.

 Both issues are beyond the scope of this paper and will be considered  in all details for special applications elsewhere.
\end{remark}

Fortunately, in the case of CP tensor input, we can provide a bound for the error of the ChebTuck approximation $\hat{f}_{\bf m}$ depending on the errors of the \emph{univariate} Chebyshev interpolations of the canonical vectors ${\bf a}_k^{(\ell)}$ in each mode. Note that the canonical vectors ${\bf c}_k^{(\ell)}$ of the Chebyshev coefficient tensor ${\bf C}$ are nothing but the \emph{univariate} Chebyshev coefficients of the splines $q_k^{(\ell)}(x)$. In other words, let $g_k^{(\ell)}(x) := \sum_{j=1}^{m} {\bf c}_k^{(\ell)}(j) T_{j}(x)$, then $g_k^{(\ell)}(x)$ is the Chebyshev interpolation of $q_k^{(\ell)}(x)$. Let $\hat{\bf C}$ be the RHOSVD truncation of ${\bf C}$, then the ChebTuck approximation error is given by
  \begin{equation}\label{eq:func_cp_error_triangle}
    \err(\hat{f}_{\bf m}, {\bf F}) \leq \err(\tilde{f}_{\bf m}, {\bf F}) + \| \hat{f}_{\bf m} - \tilde{f}_{\bf m} \|_{\infty}
  \end{equation}
where $\tilde{f}_{\bf m}$ is the Chebyshev interpolant \cref{eq:3d_cheb} with coefficients ${\bf C}$ given in \cref{eq:cheb_coeff_cp}, i.e., before RHOSVD truncation. $\tilde{f}_{\bf m}$ is thus given as
\begin{equation}\label{eq:ftilde_cp}
  \tilde{f}_{\bf m}(x_1,x_2,x_3) = \sum_{k=1}^{R} \xi_k g_k^{(1)}(x_1) g_k^{(2)}(x_2) g_k^{(3)}(x_3).
\end{equation}
Then we have the following bound for the first term in the right-hand side of \cref{eq:func_cp_error_triangle}.
\begin{lemma}\label{lem:ftilde-F}
  Let $\tilde{f}_{\bf m}$ be of the form \cref{eq:3d_cheb} with coefficients ${\bf C}$ in \cref{eq:cheb_coeff_cp}, i.e., $\tilde{f}_{\bf m}$ takes the form \cref{eq:ftilde_cp}.
  We further assume that the canonical vectors ${\bf a}_k^{(\ell)}$ of the CP tensor ${\bf F}$ in \cref{eq:func_val_cp} are at least $\ell_\infty$ normalized, i.e., $\| {\bf a}_k^{(\ell)} \|_{\infty} \leq 1$.
  Let $\delta_k^{(\ell)} := \| q_k^{(\ell)} - g_k^{(\ell)} \|_{\infty}$ be the error of the univariate Chebyshev interpolation of $q_k^{(\ell)}(x)$ and $\delta := \max_{k,\ell} \delta_k^{(\ell)}$, then we have
  \begin{equation}\label{eq:func_cp_error}
    \err(\tilde{f}_{\bf m}, {\bf F}) \leq \| \boldsymbol{\xi} \|_1 \left( (1 + \delta)^d - 1 \right),
  \end{equation}
  where $\| \boldsymbol{\xi} \|_1 = \sum_{k=1}^{R} \xi_k$ is the $\ell_1$ norm of the CP weights.
\end{lemma}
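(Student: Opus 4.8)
The plan is to exploit the fact that the univariate cubic splines interpolate their data exactly at the uniform grid nodes, so that ${\bf F}$ and $\tilde f_{\bf m}$ share the same rank-$R$ product structure and the whole estimate collapses to a triangle inequality over the canonical terms plus an elementary product-perturbation bound. First I would observe that $q_k^{(\ell)}(t_{i_\ell}^{(\ell)}) = {\bf a}_k^{(\ell)}(i_\ell)$ by the interpolatory property of the spline, so from the CP representation \cref{eq:func_val_cp} we get ${\bf F}_{i_1,i_2,i_3} = \sum_{k=1}^{R}\xi_k\, q_k^{(1)}(t_{i_1}^{(1)})\,q_k^{(2)}(t_{i_2}^{(2)})\,q_k^{(3)}(t_{i_3}^{(3)})$. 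Comparing this with the form \cref{eq:ftilde_cp} of $\tilde f_{\bf m}$ and using the triangle inequality over $k$, the quantity on the left of \cref{eq:func_cp_error} is bounded by $\sum_{k=1}^{R}|\xi_k|\max_{i_1,i_2,i_3}\bigl|\prod_{\ell=1}^{3} g_k^{(\ell)}(t_{i_\ell}^{(\ell)}) - \prod_{\ell=1}^{3} q_k^{(\ell)}(t_{i_\ell}^{(\ell)})\bigr|$, so it remains to bound each per-term product difference by $(1+\delta)^d-1$ uniformly in the grid indices.

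For that I would record two pointwise facts valid at every grid node: (i) $|q_k^{(\ell)}(t_{i_\ell}^{(\ell)})| = |{\bf a}_k^{(\ell)}(i_\ell)| \le \|{\bf a}_k^{(\ell)}\|_2 = 1$ since the canonical vectors are $\ell_2$-normalized; and (ii) $|g_k^{(\ell)}(t_{i_\ell}^{(\ell)}) - q_k^{(\ell)}(t_{i_\ell}^{(\ell)})| \le \|g_k^{(\ell)} - q_k^{(\ell)}\|_\infty = \delta_k^{(\ell)} \le \delta$, which in fact holds at all points of $[-1,1]$. The core is then the purely algebraic lemma: if $a_\ell = b_\ell + e_\ell$ with $|b_\ell|\le 1$ and $|e_\ell|\le\delta$ for $\ell=1,\dots,d$, then $\bigl|\prod_\ell a_\ell - \prod_\ell b_\ell\bigr| \le (1+\delta)^d-1$. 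I would prove this by induction on $d$ using the telescoping identity $\prod_{\ell\le d}a_\ell - \prod_{\ell\le d}b_\ell = a_1\bigl(\prod_{2\le\ell\le d}a_\ell - \prod_{2\le\ell\le d}b_\ell\bigr) + (a_1-b_1)\prod_{2\le\ell\le d}b_\ell$: writing $P_d$ for the left-hand quantity, the bounds $|a_1|\le 1+\delta$ and $|\prod_{2\le\ell\le d}b_\ell|\le 1$ give $P_d \le (1+\delta)P_{d-1}+\delta$, i.e.\ $P_d+1\le(1+\delta)(P_{d-1}+1)$; since $P_1\le\delta$, unrolling yields $P_d+1\le(1+\delta)^d$. (For $d=3$ one may instead simply expand the triple product into its seven error terms --- three of size $\delta$, three of size $\delta^2$, one of size $\delta^3$ --- whose sum is $(1+\delta)^3-1$.) Applying this with $a_\ell = g_k^{(\ell)}(t_{i_\ell}^{(\ell)})$ and $b_\ell = q_k^{(\ell)}(t_{i_\ell}^{(\ell)})$ for each fixed $k$, then summing against $|\xi_k|$, produces exactly $\|\boldsymbol{\xi}\|_1\bigl((1+\delta)^d-1\bigr)$.

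The only point I would be careful about is step (i): the bound $|{\bf a}_k^{(\ell)}(i_\ell)|\le 1$ is precisely what $\ell_2$-normalization of the skeleton vectors buys us, and it is what keeps the constant clean; for unnormalized or $\ell_\infty$-normalized CP input the statement would instead carry an extra factor $\max_{k,\ell}\|{\bf a}_k^{(\ell)}\|_\infty$. Everything else --- the spline interpolation identity at the nodes, the triangle inequality in $k$, and the product-perturbation induction --- is routine and dimension-independent, which is why the estimate is stated for general $d$.
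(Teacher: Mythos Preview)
Your proposal is correct and follows essentially the same route as the paper: both use the spline interpolation identity $q_k^{(\ell)}(t_{i_\ell}^{(\ell)})={\bf a}_k^{(\ell)}(i_\ell)$, the $\ell_2$-normalization bound $|{\bf a}_k^{(\ell)}(i_\ell)|\le 1$, and a telescoping product-difference estimate summing to $\sum_{\ell=1}^d(1+\delta)^{\ell-1}\delta=(1+\delta)^d-1$. The only cosmetic difference is that the paper writes the full telescoping sum $\prod g-\prod a=\sum_\ell(\prod_{r<\ell}g_r)(g_\ell-a_\ell)(\prod_{r>\ell}a_r)$ in one line, whereas you package the same identity as an induction; the content is identical.
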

\begin{proof}
  Recall that by construction, $q_k^{(\ell)}$ interpolates the data ${\bf a}_k^{(\ell)}$ on the uniform grid $\{ t_{i_\ell}^{(\ell)} \}_{i_\ell=1}^{n_\ell}$ and the Chebyshev interpolants have uniform error $\delta_k^{(\ell)}$. Therefore, for each $\ell = 1, \ldots, d$, $k = 1, \ldots, R$ and $i_\ell = 1, \ldots, n_\ell$, we have
  \begin{equation}
    |{\bf a}_k^{(\ell)}(i_\ell) - g_k^{(\ell)}(t_{i_\ell}^{(\ell)})| = |q_k^{(\ell)}(t_{i_\ell}^{(\ell)}) - g_k^{(\ell)}(t_{i_\ell}^{(\ell)})| \leq \| q_k^{(\ell)} - g_k^{(\ell)} \|_{\infty} = \delta_k^{(\ell)}
  \end{equation}
  and in particular,
  \begin{equation}
    |g_k^{(\ell)}(t_{i_\ell}^{(\ell)})| \leq |{\bf a}_k^{(\ell)}(i_\ell)| + \delta_k^{(\ell)} \leq \| {\bf a}_k^{(\ell)} \|_{\infty} + \delta_k^{(\ell)} \leq 1 + \delta_k^{(\ell)},
  \end{equation}
  where we have used the assumption that the canonical vectors ${\bf a}_k^{(\ell)}$ are at least $\ell_\infty$ normalized.
  Therefore, for any $i_1, \ldots, i_d$, by the definition of $\tilde{f}_{\bf m}$ in \cref{eq:ftilde_cp} and ${\bf F}$ in \cref{eq:func_val_cp}, we have
  \begin{equation*}
    \begin{aligned}
           & \left|\tilde{f}_{\bf m}(t_{i_1}^{(1)}, \ldots, t_{i_d}^{(d)}) - {\bf F}_{i_1, \ldots, i_d}\right| \leq  {\sum}_{k=1}^{R} \xi_k \left|\prod_{\ell=1}^{d} g_k^{(\ell)}(t_{i_\ell}^{(\ell)}) - \prod_{\ell=1}^{d} {\bf a}_k^{(\ell)}(i_\ell) \right| \\
      \leq & \sum_{k=1}^{R} \xi_k \sum_{\ell=1}^{d} \left| \prod_{r=1}^{\ell-1} g_k^{(r)}(t_{i_\ell}^{(r)}) \right| \left|g_k^{(\ell)}(t_{i_\ell}^{(\ell)}) - {\bf a}_k^{(\ell)}(i_\ell) \right| \left| \prod_{r=\ell+1}^d  {\bf a}_k^{(r)}(i_r) \right|                         \\
      \leq & \sum_{k=1}^{R} \xi_k \sum_{\ell=1}^{d} \prod_{r=1}^{\ell - 1} \left( 1 + \delta_k^{(r)} \right) \delta_k^{(\ell)} \leq \sum_{k=1}^{R} \xi_k \sum_{\ell=1}^{d} \left( 1 + \delta \right)^{\ell-1} \delta = \| \boldsymbol{\xi} \|_1 \left( (1+\delta)^d - 1 \right),
    \end{aligned}
  \end{equation*}
  where the second inequality results from applying a standard telescopic argument.
\end{proof}
If, in addition, the Chebyshev interpolation error satisfies $\delta < 1/d$, then we can further improve the bound \cref{eq:func_cp_error} to a linear growth with respect to the dimension $d$, as stated in the following corollary.
\begin{corollary}\label{cor:ftilde-F}
  Under the assumptions of \cref{lem:ftilde-F}, further assuming $\delta d < 1$ yields
  \begin{equation}
    \err(\tilde{f}_{\bf m}, {\bf F}) \leq \| \boldsymbol{\xi} \|_1 \min\left\{\dfrac{\delta d}{1 - \delta d}, (\e - 1) \delta d\right\}.
  \end{equation}
\end{corollary}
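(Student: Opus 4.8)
The plan is to start directly from the estimate already furnished by \cref{lem:ftilde-F}, namely that the left-hand side is bounded by $\|\boldsymbol{\xi}\|_1\bigl((1+\delta)^d-1\bigr)$, and then to bound the scalar quantity $(1+\delta)^d-1$ from above by \emph{each} of the two expressions appearing inside the minimum, under the standing hypothesis $\delta d < 1$.

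For the first bound $\frac{\delta d}{1-\delta d}$, I would expand by the binomial theorem, $(1+\delta)^d - 1 = \sum_{k=1}^d \binom{d}{k}\delta^k$, apply the crude estimate $\binom{d}{k}\le d^k$, and sum the resulting geometric series: $\sum_{k=1}^d (\delta d)^k \le \sum_{k=1}^{\infty}(\delta d)^k = \frac{\delta d}{1-\delta d}$, where convergence is guaranteed precisely by the assumption $\delta d < 1$.

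For the second bound $(\e-1)\delta d$, I would first pass to the exponential via $(1+\delta)^d \le \e^{\delta d}$ (a consequence of $1+t\le \e^t$), which reduces matters to showing $\e^{t}-1 \le (\e-1)t$ for $t = \delta d \in [0,1)$. This is a convexity inequality: since $t\mapsto \e^t$ is convex, on the interval $[0,1]$ the graph of $\e^{t}-1$ lies below the secant line joining the endpoints $(0,0)$ and $(1,\e-1)$, which is exactly the desired inequality. Combining the two estimates, $(1+\delta)^d-1$ is dominated by both quantities, hence by their minimum, and multiplying through by $\|\boldsymbol{\xi}\|_1$ yields the corollary.

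The only mildly delicate point is the second estimate: one must resist settling for the coarser bound $\e^{t}-1\le t\,\e^{t}$ and instead invoke the secant/convexity argument to obtain the sharp constant $\e-1$; everything else is a routine manipulation of the series already bounded in \cref{lem:ftilde-F}.
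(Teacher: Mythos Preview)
Your argument is correct. The only genuine difference from the paper concerns the first bound $\frac{\delta d}{1-\delta d}$: whereas you go directly via the binomial expansion and the crude estimate $\binom{d}{k}\le d^k$ followed by a geometric series, the paper first passes to the exponential via $(1+\delta)^d\le \e^{\delta d}$ (the same step you use for the second bound) and then invokes the elementary inequality $\e^x\le (1-x)^{-1}$ for $x\in(0,1)$, giving $\e^{\delta d}-1\le \frac{1}{1-\delta d}-1=\frac{\delta d}{1-\delta d}$. The paper's route is slightly more unified (both bounds flow from the single majorant $\e^{\delta d}$), while yours is arguably more elementary for the first bound since it avoids the exponential altogether; both yield exactly the same constants. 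For the second bound your convexity/secant argument is precisely what the paper uses, stated there as $\e^x\le (\e-1)x+1$ on $(0,1)$.
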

\begin{proof}
  First note that $(1+\delta)^d \leq \e^{\delta d}$. Then the desired result follows from the elementary inequalities $\e^x\leq (\e - 1) x + 1 \text{ and } \e^x \leq (1-x)^{-1}$ for $x \in (0,1)$.
\end{proof}
The following lemma states that the error of the univariate Chebyshev interpolation of the cubic spline $q_k^{(\ell)}$ decays cubically with respect to the Chebyshev degree $m$.
\begin{lemma}\label{lem:cheb_spline_error}
  Let $V_k^{\ell} < +\infty $ be the bounded variation of the third derivative of the spline $q_k^{(\ell)}$ and $V = \max_{k,\ell} V_k^{\ell}$. Then we have
  \begin{equation}
    \delta_k^{(\ell)} \leq \dfrac{4 V_k^{\ell}}{3\pi (m - 3)^3} \text{ for } k = 1, \ldots, R,\quad \ell = 1,2,3 \text{ and } \delta \leq \dfrac{4 V}{3\pi (m - 3)^3}.
  \end{equation}
\end{lemma}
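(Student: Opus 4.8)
The plan is to reduce the claim to the classical one-dimensional estimate for the error of Chebyshev interpolation of a function whose third derivative has bounded variation, and then to check that each spline $q_k^{(\ell)}$ lies in that class. Because the statement sets $\delta=\max_{k,\ell}\delta_k^{(\ell)}$ and $V=\max_{k,\ell}V_k^{\ell}$, the inequality for $\delta$ follows at once from the per-pair inequality, so the entire content is to show $\delta_k^{(\ell)}\le \tfrac{4V_k^{\ell}}{3\pi(m-3)^3}$ for fixed $k,\ell$.

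First I would record the regularity of $q_k^{(\ell)}$. By construction it is a $C^2$ function on $[-1,1]$ which is a cubic polynomial on every cell of the uniform grid $\{t_i^{(\ell)}\}$; hence $q_k^{(\ell)},(q_k^{(\ell)})',(q_k^{(\ell)})''$ are absolutely continuous, while $(q_k^{(\ell)})'''$ is piecewise constant with jumps only at the finitely many interior knots and therefore of bounded variation --- its total variation being precisely the quantity $V_k^{\ell}$ in the statement. This is also why the smoothness order entering the estimate is exactly $\nu=3$ and no larger: a cubic spline is in general not $C^3$. I would also note that $g_k^{(\ell)}=\sum_{j=1}^{m}{\bf c}_k^{(\ell)}(j)T_j$ with ${\bf c}_k^{(\ell)}=W^{(\ell)}{\bf q}_k^{(\ell)}$ is exactly the polynomial interpolating $q_k^{(\ell)}$ at the $m$ Chebyshev nodes $\{s_i^{(\ell)}\}$ --- this is the DCT identity \cite[(6.27)--(6.28)]{mason2002chebyshev} already used in \cref{eq:3d_cheb_coeff} --- so $\delta_k^{(\ell)}$ is genuinely an interpolation error and the next step applies.

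Second, I would invoke the standard decay bound for Chebyshev interpolants of functions of finite smoothness: if $f,\dots,f^{(\nu-1)}$ are absolutely continuous on $[-1,1]$ and $f^{(\nu)}$ has total variation $V$, then, in the notation of this paper, the Chebyshev interpolant $g$ of $f$ through the $m$ nodes $\{s_i^{(\ell)}\}$ satisfies $\|f-g\|_\infty\le \tfrac{4V}{\pi\nu(m-3)^{\nu}}$; see \cite[Thm.~7.2]{trefethen2019}. If a self-contained derivation is preferred, one combines the Chebyshev-coefficient estimate $|a_j|\le \tfrac{2V}{\pi(j-\nu)^{\nu+1}}$ (\cite[Thm.~7.1]{trefethen2019}) with the aliasing inequality bounding $\|f-g\|_\infty$ by twice the tail of the Chebyshev series of $f$ beyond the interpolation degree, and estimates the resulting tail sum by an integral $\int^{\infty}t^{-(\nu+1)}\,dt$. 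Applying this with $f=q_k^{(\ell)}$ and $\nu=3$ gives $\delta_k^{(\ell)}\le \tfrac{4V_k^{\ell}}{3\pi(m-3)^3}$; taking the maximum over $k$ and $\ell$ yields the stated bound on $\delta$.

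There is no genuine obstacle; the argument is essentially bookkeeping. The only two points needing a moment's care are (i) confirming that the third derivative of the cubic spline has finite total variation, so that the $\nu=3$ case of the interpolation bound legitimately applies, and (ii) aligning this paper's indexing (polynomials $T_1,\dots,T_m$, i.e.\ degree at most $m-1$, interpolating at the $m$ nodes $s_i^{(\ell)}$) with the form of the cited estimate so that the denominator appears as $(m-3)^3$.
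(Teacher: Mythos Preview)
Your proposal is correct and follows essentially the same approach as the paper: verify that the cubic spline $q_k^{(\ell)}$ is $C^2$ with piecewise constant (hence bounded-variation) third derivative, then invoke \cite[Thm.~7.2]{trefethen2019} with $\nu=3$. The paper's proof is slightly terser and omits your remarks on the aliasing derivation and the index alignment, but the argument is the same.
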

\begin{proof}
  By definition, $q_k^{(\ell)}$ is a $C^2$ piecewise cubic polynomial, and thus absolutely continuous differentiable until the second derivative.
  The third derivative is piecewise constant thus of bounded variation $V_k^{\ell}$.
  Note that $V_k^{\ell}$ is uniquely determined by the spline knots (here the uniform grid $\{ t_{i_\ell}^{(\ell)} \}_{i_\ell=1}^{n_\ell}$) and
  the spline values ${\bf a}_k^{(\ell)}(i_\ell)$.
  The results then follow from the standard error bound for Chebyshev interpolation of differentiable functions, see e.g., \cite[Thm. 7.2]{trefethen2019}.
\end{proof}

Now, we consider the error introduced by the
RHOSVD truncation \cite{khoromskij2009multigrid} in Algorithm \ref{alg:cheb_tuck_alg_cp}, namely, the second term in the right-hand side of \cref{eq:func_cp_error_triangle}.
\begin{lemma}\label{lem:ftilde-fhat}
  Let $\sigma_k^{(\ell)}$, $k=1,\ldots, \min\{m,R\}$ be the descending singular values of the side matrices $C^{(\ell)} = [{\bf c}_1^{(\ell)}, \cdots, {\bf c}_R^{(\ell)}] \in \R^{m \times R}$ of the CP format \cref{eq:cheb_coeff_cp}, and $r_\ell$ be the truncation rank of the RHOSVD approximation, then we have
  \begin{equation}
    \| \hat{f}_{\bf m} - \tilde{f}_{\bf m} \|_{\infty} \leq m^{d/2} \| \boldsymbol{\xi} \| \sum_{\ell=1}^{d} \Big( \sum_{k=r_\ell+1}^{\min\{m,R\}} (\sigma_k^{(\ell)})^2 \Big)^{1/2}, \text{ where } \| \boldsymbol{\xi} \|^2 = \sum_{k=1}^{R} \xi_k^2.
  \end{equation}
\end{lemma}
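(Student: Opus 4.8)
The plan is to separate the two sources of error exactly as in the proof of \cref{prop:cheb_tuck_error}, and then to invoke the known RHOSVD error estimate for canonical tensors. First, observe that $\tilde{f}_{\bf m}$ and $\hat{f}_{\bf m}$ are the degree-${\bf m}$ Chebyshev polynomials associated with the coefficient tensors ${\bf C}$ and its RHOSVD truncation $\hat{\bf C}$, respectively, so their difference is the Chebyshev polynomial with coefficient tensor ${\bf C}-\hat{\bf C}$. Using $|T_j(x)|\le 1$ on $[-1,1]$ together with Cauchy--Schwarz over the $m^d$ multi-indices — the same computation as in \cref{prop:cheb_tuck_error} — yields $\|\hat{f}_{\bf m}-\tilde{f}_{\bf m}\|_\infty\le\sum_{i_1,\dots,i_d}|({\bf C}-\hat{\bf C})_{i_1\cdots i_d}|\le m^{d/2}\|{\bf C}-\hat{\bf C}\|_{\mathrm F}$, so it remains to estimate $\|{\bf C}-\hat{\bf C}\|_{\mathrm F}$.

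For this, recall from \cref{eq:cheb_coeff_cp} that ${\bf C}$ is available in canonical form with weights $\boldsymbol{\xi}$ and side matrices $C^{(\ell)}=[{\bf c}_1^{(\ell)},\dots,{\bf c}_R^{(\ell)}]$, and that $\hat{\bf C}={\bf C}\times_1 P^{(1)}\times_2 P^{(2)}\times_3 P^{(3)}$, where $P^{(\ell)}$ is the orthogonal projector onto the span of the $r_\ell$ leading left singular vectors of $C^{(\ell)}$. I would then apply the RHOSVD error bound of \cite{khoromskij2009multigrid} (see also \cite{khoromskaia2022ubiquitous}): writing $\hat{\bf C}$ as the composition of the three modewise projections and peeling them off one at a time, a telescoping triangle inequality together with the non-expansiveness of orthogonal projections in the Frobenius norm reduces the error to $\sum_{\ell}\|{\bf C}-{\bf C}\times_\ell P^{(\ell)}\|_{\mathrm F}$; each summand is then bounded by a term-by-term Cauchy--Schwarz estimate over the rank-one components, using $\|(I-P^{(\ell)})C^{(\ell)}\|_{\mathrm F}^2=\sum_{k=r_\ell+1}^{\min\{m,R\}}(\sigma_k^{(\ell)})^2$, which gives $\|{\bf C}-\hat{\bf C}\|_{\mathrm F}\le\|\boldsymbol{\xi}\|\sum_{\ell=1}^{d}\bigl(\sum_{k=r_\ell+1}^{\min\{m,R\}}(\sigma_k^{(\ell)})^2\bigr)^{1/2}$. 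Combining with the first step delivers the claimed bound.

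The main obstacle is obtaining the $\|\boldsymbol{\xi}\|$ factor (rather than the cruder $\ell_1$ factor $\|\boldsymbol{\xi}\|_1$) in the RHOSVD step; this rests on two points that must be handled with care: (i) that applying an orthogonal mode projection never increases the Frobenius norm, so that the three modewise truncation errors add rather than compound, and (ii) the Cauchy--Schwarz inequality $\sum_k|\xi_k|\,\|{\bf b}_k^{(\ell)}\|\le\|\boldsymbol{\xi}\|\,\|[{\bf b}_1^{(\ell)},\dots,{\bf b}_R^{(\ell)}]\|_{\mathrm F}$ applied to each deflated rank-one sum, with ${\bf b}_k^{(\ell)}:=(I-P^{(\ell)}){\bf c}_k^{(\ell)}$. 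One should also keep normalization in mind: the estimate is cleanest when the skeleton vectors ${\bf c}_k^{(\ell)}$ in \cref{eq:cheb_coeff_cp} are rescaled to unit $\ell_2$-norm with the scaling absorbed into $\boldsymbol{\xi}$, after which the stated inequality follows directly from the cited RHOSVD bound; the remaining manipulations are routine.
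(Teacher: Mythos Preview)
Your proposal is correct and follows essentially the same route as the paper: first bound $\|\hat f_{\bf m}-\tilde f_{\bf m}\|_\infty\le m^{d/2}\|{\bf C}-\hat{\bf C}\|_{\mathrm F}$ by the argument of \cref{prop:cheb_tuck_error}, then invoke the RHOSVD error estimate \cite[Thm.~2.5]{khoromskij2009multigrid}. The paper's proof is in fact just these two sentences; your write-up supplies more detail on the RHOSVD step (the telescoping over modes and the Cauchy--Schwarz over rank-one terms) and correctly flags the normalization point, which the paper leaves implicit.
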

\begin{proof}
  By similar arguments as in the proof of \cref{prop:cheb_tuck_error}, we have
  $\| \tilde{f}_{\bf m} - \hat{f}_{\bf m} \|_{\infty} \leq m^{d/2} \| {\bf C} - \hat{\bf{C}} \|_{\mathrm{F}}$.
  The result then follows from the standard error analysis of the RHOSVD truncation \cite[Thm. 2.5]{khoromskij2009multigrid}. See also \cite{khoromskaia2022ubiquitous} for a more recent presentation.
\end{proof}
Combining \cref{cor:ftilde-F}, \cref{lem:ftilde-fhat} and the triangle inequality \cref{eq:func_cp_error_triangle}, we arrive at the following theorem immediately, which provides a bound for the error of the ChebTuck approximation in the algebraic case of CP tensor input.
\begin{theorem}\label{thm:cheb_tuck_error_cp}
  With the notations above, and for $\delta d < 1$, the error of the ChebTuck approximation $\hat{f}_{\bf m}$ of a CP tensor ${\bf F}$ produced by \cref{alg:cheb_tuck_alg_cp} is bounded by
  \begin{equation*}
    \err(\hat{f}_{\bf m}, {\bf F}) \leq \| \boldsymbol{\xi} \|_1 \e \delta d + m^{d/2} \| \boldsymbol{\xi} \| \sum_{\ell=1}^{d} \Big( \sum_{k=r_\ell+1}^{\min\{m,R\}} (\sigma_k^{(\ell)})^2 \Big)^{1/2}.
  \end{equation*}
\end{theorem}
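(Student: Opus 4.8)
The plan is to assemble the two error contributions that have already been estimated separately and then tidy up the constant. The starting point is the triangle-inequality decomposition \cref{eq:func_cp_error_triangle}, which bounds the grid error $\max_{i_1,i_2,i_3}\bigl|\hat f_{\bf m}(t_{i_1}^{(1)},t_{i_2}^{(2)},t_{i_3}^{(3)}) - {\bf F}_{i_1,i_2,i_3}\bigr|$ by the sum of (i) the discrepancy $\max_{i_1,i_2,i_3}\bigl|\tilde f_{\bf m}(t_{i_1}^{(1)},t_{i_2}^{(2)},t_{i_3}^{(3)}) - {\bf F}_{i_1,i_2,i_3}\bigr|$ between the untruncated CP--Chebyshev interpolant $\tilde f_{\bf m}$ and the input data, and (ii) the uniform-norm perturbation $\|\hat f_{\bf m} - \tilde f_{\bf m}\|_{\infty}$ introduced by the RHOSVD compression of the Chebyshev coefficient tensor ${\bf C}$ in \cref{alg:cheb_tuck_alg_cp}.

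For term (i) I would invoke \cref{cor:ftilde-F}: under the standing hypothesis $\delta d < 1$ it supplies the bound $\|\boldsymbol{\xi}\|_1 (\e-1)\delta d$, and the crude estimate $\e - 1 \le \e$ converts this into the first summand $\|\boldsymbol{\xi}\|_1 \e \delta d$ of the theorem (one could equally run \cref{lem:ftilde-F} and use $(1+\delta)^d - 1 \le \e^{\delta d}-1 \le \e\,\delta d$). For term (ii) I would apply \cref{lem:ftilde-fhat} verbatim, since it is already stated in exactly the form $m^{d/2}\|\boldsymbol{\xi}\|\sum_{\ell=1}^{d}\bigl(\sum_{k=r_\ell+1}^{\min\{m,R\}}(\sigma_k^{(\ell)})^2\bigr)^{1/2}$. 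Adding the two bounds gives the claimed estimate.

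The only point demanding attention is bookkeeping rather than a genuine obstacle: one must make sure that $\delta$, the canonical weights $\xi_k$, the side-matrix singular values $\sigma_k^{(\ell)}$, and the truncation ranks $r_\ell$ entering \cref{cor:ftilde-F} and \cref{lem:ftilde-fhat} all refer to the same quantities generated by \cref{alg:cheb_tuck_alg_cp}, and that the hypothesis $\delta d < 1$ is innocuous in practice — via \cref{lem:cheb_spline_error} it amounts to $4Vd < 3\pi(m-3)^3$, i.e., a mild lower bound on the Chebyshev degree $m$, which should be recorded as a standing assumption. Because the substantive work (the telescopic product estimate, the cubic-spline Chebyshev interpolation rate, and the RHOSVD error analysis) is already discharged in \cref{lem:ftilde-F,cor:ftilde-F,lem:cheb_spline_error,lem:ftilde-fhat}, the theorem follows immediately by concatenating these results through \cref{eq:func_cp_error_triangle}.
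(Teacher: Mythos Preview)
Your proposal is correct and mirrors the paper's own argument exactly: the paper simply states that the theorem follows immediately by combining \cref{cor:ftilde-F}, \cref{lem:ftilde-fhat}, and the triangle inequality \cref{eq:func_cp_error_triangle}. Your additional remark that $\e-1\le \e$ supplies the constant, and your note on the standing assumption $\delta d<1$, are precisely the bookkeeping details the paper leaves implicit.
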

\begin{remark}\label{rem:cheb_tuck_error_cp}
  From the above theorem, one can design an adaptive algorithm that constructs the ChebTuck approximation of a CP tensor with a given error $\varepsilon > 0$ by choosing the smallest possible $r_\ell$ in each mode $\ell = 1,2,3$ such that the error bound is less than $\varepsilon$. This is because all the terms in the bound are computable. Specifically, $\delta$ will be anyway computed when we construct the univariate Chebyshev interpolants, and the singular values $\sigma_k^{(\ell)}$ are available after the SVD decomposition of the side matrices in the CP format. Moreover, by \cref{lem:cheb_spline_error}, a sufficient condition for $\delta d < 1$ is
  $m > 3 + (4Vd / 3\pi)^{1/3}$,
  which gives an explicit lower bound on the Chebyshev degree in terms of the spline-regularity parameter $V$.
  The bounded variation $V$ defined in \cref{lem:cheb_spline_error} measures the cumulative jump of the piecewise-constant third derivative across the spline intervals, so it quantifies how oscillatory the cubic spline is at the scale resolved by the grid.
\end{remark}

\subsubsection{The case of multi-particle potentials}\label{sec:cheb_tuck_error_cp_long_range}
In this section, we consider the ChebTuck approximation error for multi-particle potentials of the form
\begin{equation}\label{eq:total_potential}
  P({\bf x}) = \sum_{\nu=1}^N z_\nu p(\|{\bf x}-{\bf x}_\nu\|), \quad z_\nu \in \R \text{ and } {\bf x}_\nu, {\bf x} \in [-1,1]^d, \ d = 3,
\end{equation}
where ${\bf x}_\nu$ is the position of the $\nu$-th particle, $z_\nu$ is its charge and $p(\|{\bf x}\|) = 1/\|{\bf x}\|$ is the Newton (Coulomb) kernel.
This is also the main application area in our numerical experiments. We show that with the use of the range-separated tensor format~\cite{benner2018range}, the error of the ChebTuck approximation of the long-range part decays exponentially fast with respect to the truncation rank $r_\ell$, i.e., we have the following \cref{cor:cheb_tuck_error_cp_long_range} for the second term of the error bound in \cref{thm:cheb_tuck_error_cp}.

\begin{corollary}\label{cor:cheb_tuck_error_cp_long_range}
  Suppose ${\bf F} \in \R^{n\times n \times n}$ is the tensor representation of the long-range part of an $N$-particle potential satisfying the conditions of \cite[Thm. 3.1]{benner2018range}.
  Then the singular values of the side matrices $C^{(\ell)}$ decay exponentially fast, resulting in the following error bound
  \begin{equation} \label{eq:bound_sv}
    \sum_{\ell=1}^d\Big( \sum_{k=r+1}^{\min\{m,R\}} (\sigma_k^{(\ell)})^2 \Big)^{1/2} \leq C_{m,n} N d \exp\left(-\exp\left(r^{\frac{2}{3}}\right)\right),
  \end{equation}
  where, given the target function $f$, the constant $C=C_{m,n}$ depends only on the Chebyshev degree $m$ and the size $n$ of the uniform grid. In particular, the $\varepsilon$-Tucker rank of the canonical CCT tensor ${\bf C}$ is estimated by $\mathcal{O}(\log^{3/2}|\log(\varepsilon / C_{m,n}N)|)$.
\end{corollary}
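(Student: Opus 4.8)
The plan is to trace through how the hypotheses of \cite[Thm.~3.1]{benner2018range} control the canonical vectors ${\bf c}_k^{(\ell)} = W^{(\ell)}{\bf q}_k^{(\ell)}$ and hence the side matrices $C^{(\ell)}$. First I would recall that, in the range-separated format, the long-range part of an $N$-particle potential is represented as a short canonical sum (rank $R$ proportional to the Newton-kernel separation rank) whose skeleton vectors are sampled Gaussians $\e^{-t_q^2 |x_\ell - c_\ell|^2}$ arising from a sinc-quadrature of $\int_0^\infty \e^{-t^2|x|^2}\,\dd t$; the key quantitative input from \cite[Thm.~3.1]{benner2018range} is that the spectrum of the mode-$\ell$ skeleton matrix of the long-range Newton kernel decays super-exponentially, at the rate $\exp(-\exp(k^{2/3}))$ (the double-exponential being characteristic of sinc/Gaussian-sum approximations). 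Because each canonical vector of ${\bf C}$ is obtained by the two fixed linear maps "cubic spline on the uniform grid $\mapsto$ evaluation at Chebyshev nodes $\mapsto$ inverse DCT" $W^{(\ell)}$, the side matrix $C^{(\ell)}$ is $W^{(\ell)} S^{(\ell)} A^{(\ell)}_{\mathrm{lr}}$ where $S^{(\ell)}$ is the spline-interpolation-then-resample matrix and $A^{(\ell)}_{\mathrm{lr}}$ is the mode-$\ell$ skeleton matrix of the long-range part on the fine grid.

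The core step is then a standard singular-value perturbation/multiplicativity argument: $\sigma_{k}(C^{(\ell)}) \le \|W^{(\ell)}\|_2\,\|S^{(\ell)}\|_2\,\sigma_k(A^{(\ell)}_{\mathrm{lr}})$, and the spectral norms $\|W^{(\ell)}\|_2$, $\|S^{(\ell)}\|_2$ depend only on $m$ and $n$ (they are completely determined by the Chebyshev degree and the uniform-grid size, not by the particles). Collecting these into a single constant $C_{m,n}$ and using that there are at most $N$ distinct particle centers contributing $O(N)$ to the overall scale of the skeleton sum, one gets
\begin{equation*}
  \Big(\sum_{k=r+1}^{\min\{m,R\}} (\sigma_k^{(\ell)})^2\Big)^{1/2}
  \le C_{m,n}\, N \,\exp\!\big(-\exp(r^{2/3})\big),
\end{equation*}
and summing over the $d$ modes yields \cref{eq:bound_sv}. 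For the rank estimate, I would invert the bound: to make $C_{m,n} N d\,\exp(-\exp(r^{2/3})) \le \varepsilon$ it suffices that $\exp(r^{2/3}) \ge \log(C_{m,n} N d/\varepsilon)$, i.e. $r^{2/3} \ge \log\log(\dots)$, hence $r = \mathcal{O}\big(\log^{3/2}\!\big|\log(\varepsilon/(C_{m,n}N))\big|\big)$ (absorbing $d$ into the constant), which is exactly the claimed $\varepsilon$-Tucker rank.

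The main obstacle I anticipate is making precise, and tracking through the two discrete transforms, the exact form of the super-exponential decay $\exp(-\exp(r^{2/3}))$: \cite[Thm.~3.1]{benner2018range} presumably states the decay for the continuous long-range Newton kernel or its canonical-rank parameter, and one has to argue that replacing the kernel by (i) its cubic-spline interpolant on the uniform grid and (ii) the Chebyshev interpolant of that spline does not destroy the rate — only inflates the constant. This is where the hypotheses "${\bf F}$ satisfies the conditions of \cite[Thm.~3.1]{benner2018range}" do the work: they guarantee the long-range component is analytic in a neighborhood of $[-1,1]^3$, so both interpolation steps are benign and the singular-value decay of $C^{(\ell)}$ inherits that of $A^{(\ell)}_{\mathrm{lr}}$ up to the $m,n$-dependent norm factors. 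A secondary, purely bookkeeping, point is confirming that the $N$-dependence enters only linearly (each particle adds one shifted copy of the same univariate Gaussian family, so the side matrices stack horizontally and $\sigma_k$ scales like $\sqrt{N}$ or $N$ depending on normalization — either way it is absorbed cleanly into the stated bound).
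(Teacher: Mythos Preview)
Your proposal is correct and follows essentially the same approach as the paper: write $C^{(\ell)} = W^{(\ell)} P^{(\ell)} A^{(\ell)}$ for the linear spline-then-resample map $P^{(\ell)}$ (your $S^{(\ell)}$), invoke the singular-value decay of $A^{(\ell)}$ from \cite[Thm.~3.1]{benner2018range}, and transfer it to $C^{(\ell)}$ via $\sigma_k(C^{(\ell)}) \le \|W^{(\ell)}P^{(\ell)}\|\,\sigma_k(A^{(\ell)})$, absorbing the $(m,n)$-dependent operator norm into $C_{m,n}$. Your ``main obstacle'' paragraph is over-cautious: once you have observed that the spline-evaluate-DCT pipeline is a fixed linear map, the min-max characterization of singular values does all the work and no analyticity argument is needed---the paper's proof is exactly this short.
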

\begin{proof}
  Let $A^{(\ell)} = [{\bf a}_1^{(\ell)}, \cdots, {\bf a}_R^{(\ell)}] \in \R^{n \times R}$ be the side matrix of the CP tensor ${\bf F}$ \cref{eq:func_val_cp}.
  Then \cite[Thm. 3.1]{benner2018range} implies that the $\varepsilon$-rank of the side matrix $A^{(\ell)}$ has the bound $r_\varepsilon(A^{\ell}) \leq C \log^{3/2}(|\log(\varepsilon / N)|)$ thus we have
  \begin{equation}
  \Big( \sum_{k=r_\ell+1}^{\min\{n,R\}} (\sigma_k^{(\ell)}(A^{(\ell)}))^2 \Big)^{1/2} \leq C N \exp\left(-\exp\left(r^{\frac{2}{3}}\right)\right).
  \end{equation}
  Let $P^{(\ell)}: \R^n \to \R^m$, ${\bf q}_k^{(\ell)} = P^{(\ell)} {\bf a}_k^{(\ell)}$ be the linear operator that maps the canonical vector ${\bf a}_k^{(\ell)}$ to the evaluations of the spline $q_k^{(\ell)}$ at the Chebyshev points, i.e., the procedure in line~\ref{line:spline_eval} of \cref{alg:cheb_tuck_alg_cp}.
  Therefore, we have ${\bf c}_k^{(\ell)} = W^{(\ell)} P^{(\ell)} {\bf a}_k^{(\ell)}$ by line~\ref{line:cheb_coeff_cp} of \cref{alg:cheb_tuck_alg_cp}, hence ${C}^{(\ell)} = W^{(\ell)} P^{(\ell)} A^{(\ell)}$. Therefore we have $\sigma_k^{(\ell)} \leq \|W^{(\ell)} P^{(\ell)}\| \sigma_k^{(\ell)}(A^{(\ell)})$ for $k = 1, \ldots, R$ and $\ell = 1,2,3$ by the min-max characterization of singular values. It is clear that $\|W^{(\ell)} P^{(\ell)}\|$ only depends on $m$ and $n$ since the spline knots and the Chebyshev points are only defined by $m$ and $n$ in the fixed interval $[-1,1]$.
  Absorbing the constant $C$ and the factor $\|W^{(\ell)} P^{(\ell)}\|$ into $C_{m,n}$ completes the proof.
\end{proof}
\begin{remark}\label{rem:cheb_tuck_error_cp_long_range}
  Regarding the constant $C_{m,n}$, numerical estimates show that it is in fact decaying polynomially with respect to both $m$ and $n$, though we do not have a proof for this claim. Combining \cref{cor:cheb_tuck_error_cp_long_range} with \cref{thm:cheb_tuck_error_cp} and \cref{lem:cheb_spline_error}, we can conclude that for the long-range part of the $N$-particle potential, the ChebTuck approximation error has the following estimate
  \begin{equation}
    \err(\hat{f}_{\bf m},{\bf F}) \lesssim d m^{-3} + m^{d/2} N d \exp\left(-\exp\left(r^{\frac{2}{3}}\right)\right).
  \end{equation}
  This estimate holds when $\delta d < 1$, which is guaranteed by choosing the Chebyshev degree $m \geq m_0 = \mathcal{O}((Vd)^{1/3})$ (see also \cref{rem:cheb_tuck_error_cp}).
  For the structured long-range inputs considered here, the Gaussian-integral form of the canonical vectors (see e.g., \cite{benner2018range,bertoglio2012low,hackbusch2006low}) and the numerical evidence suggest that the corresponding variation parameter $V$ behaves essentially independent of $n$. A rigorous analysis of this application-specific behavior, together with explicit constants, is beyond the scope of the present paper and is left for future work.
  For fixed dimension $d$, in particular for practical applications to multi-particle potentials where $d=3$, the doubly exponential decay rate in the truncation rank $r$ essentially compensates the growth $m^{d/2}$ in the second term, resulting in a very favorable error bound.
  The results are consistent with our numerical experiments in \cref{sec:RS_tensor}.
\end{remark}

\subsection{Complexity bounds for \texorpdfstring{\cref{alg:cheb_tuck_func,alg:cheb_tuck_alg,alg:cheb_tuck_alg_cp}}{Algorithms 3.1 to 3.3}}\label{ssec:complexity}

In this section, we briefly discuss the time complexity of the proposed algorithms for the ChebTuck approximation of a function in both functional and algebraic cases.

\begin{proposition}\label{prop:compl}
  For the functional case \cref{alg:cheb_tuck_func}, let $C_{f}$ be the cost of one functional call.
  For the algebraic cases \cref{alg:cheb_tuck_alg,alg:cheb_tuck_alg_cp}, let $n = \max_\ell n_\ell$ be the maximum grid size.
  For the algebraic cases with CP tensor input \cref{alg:cheb_tuck_alg_cp,alg:cheb_tuck_alg_cp}, let $R$ be the rank of the CP tensor. Let $m$ be Chebyshev degree in each mode and $\varepsilon > 0$ be the Tucker-ALS or RHOSVD tolerance. Then the complexity of the algorithms is estimated by \\
  \cref{alg:cheb_tuck_func}: $C_{f} m^3 + C_1 m^3 \log m + C_2 m^4 + C_3 m^4 |\log \varepsilon|$.\\
  \cref{alg:cheb_tuck_alg}: $C_4 m^3 + C_5n^3 + C_1 m^3 \log m + C_2 m^4 + C_3 m^4 |\log \varepsilon|$.\\
  \cref{alg:cheb_tuck_alg_cp}: $C_5 nR + C_6 mR + C_7 m R\log m + C_{8} mR \min\{m,R\} + C_{9} R |\log \varepsilon|^3$.\\
\end{proposition}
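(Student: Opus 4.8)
The plan is to traverse each of the three algorithms line by line, charge every elementary step its arithmetic cost, and then collect terms; the stated bounds are exactly what this bookkeeping produces. For \cref{alg:cheb_tuck_func}, line~\ref{line:cheb_coeff} first samples $f$ at the $m^3$ tensor-product Chebyshev nodes, costing $C_f m^3$, and then evaluates \cref{eq:3d_cheb_coeff}, i.e., three mode multiplications by the matrices $W^{(\ell)}$ of \cref{eq:dct_matrix}. Since each $W^{(\ell)}$ agrees, up to the boundary rescaling in \cref{eq:dct_matrix}, with an inverse discrete cosine transform matrix, the mode product $\times_\ell W^{(\ell)}$ amounts to $m^2$ one-dimensional length-$m$ DCTs, each executable in $O(m\log m)$ by the FFT, so the three modes together cost $C_1 m^3\log m$. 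Line~\ref{line:tucker_decomp} runs \texttt{tuck\_als} on ${\bf C}\in\R^{m\times m\times m}$: following \cref{rem:tucker_alg}, the HOSVD-type initialization and each subsequent ALS sweep are dominated by SVDs of the three $m\times m^2$ unfolding matrices together with the associated mode products, all $O(m^4)$; the initialization thus contributes $C_2 m^4$, and --- under the usual (locally valid) linear-convergence assumption --- the $O(|\log\varepsilon|)$ sweeps required to reach relative tolerance $\varepsilon$ contribute $C_3 m^4|\log\varepsilon|$. Lines~\ref{line:factor_functions}--\ref{line:return_cheb_tuck} merely repackage already-computed data ($O(mr)$ work) and are absorbed.

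For \cref{alg:cheb_tuck_alg}, the single new ingredient is the trivariate tensor-product cubic spline $Q$ on the $n\times n\times n$ grid, built mode by mode by solving $O(n^2)$ tridiagonal systems of size $n$ per mode (each $O(n)$), for a total of $C_5 n^3$. The ensuing call to \cref{alg:cheb_tuck_func} with $f=Q$ reuses the estimate above, except that one evaluation of a cubic spline on an equispaced mesh costs $O(1)$ --- cell location is $O(1)$, then a fixed tricubic is evaluated --- so the $C_f m^3$ term specializes to $C_4 m^3$ while every remaining term is inherited verbatim.

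For \cref{alg:cheb_tuck_alg_cp}, line~\ref{line:spline_eval} builds $3R$ \emph{univariate} cubic splines, each from at most $n$ data points in $O(n)$ operations ($C_5 nR$ in total), and samples each at the $\leq m$ Chebyshev nodes ($C_6 mR$). Line~\ref{line:cheb_coeff_cp} applies $W^{(\ell)}$ to each of the $3R$ vectors ${\bf q}_k^{(\ell)}\in\R^m$ via the fast DCT, costing $C_7 mR\log m$, and stacking the results into the side matrices $C^{(\ell)}\in\R^{m\times R}$ is $O(mR)$. The \texttt{rhosvd} step computes the truncated left singular vectors of the three $m\times R$ side matrices at cost $O(mR\min\{m,R\})=C_8 mR\min\{m,R\}$, and then assembles the core $\boldsymbol{\beta}={\bf C}\times_1(V^{(1)})^\top\times_2(V^{(2)})^\top\times_3(V^{(3)})^\top$ directly from the CP form \cref{eq:cheb_coeff_cp}: projecting the $3R$ skeleton vectors is cheaper than the preceding SVD, while accumulating the $R$ resulting rank-one tensors of size $r_1\times r_2\times r_3$ costs $O(R\,r_1 r_2 r_3)$, which, using the $\varepsilon$-rank bound $r_\ell=O(|\log\varepsilon|)$ (geometric singular-value decay, cf.~\cref{cor:cheb_tuck_error_cp_long_range}), is $C_9 R|\log\varepsilon|^3$. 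Lines~\ref{line:factor_functions}--\ref{line:return_cheb_tuck} of \cref{alg:cheb_tuck_func} are again negligible.

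The genuinely delicate points --- the only places where a deterministic worst-case count is unavailable --- are the two steps that rest on a structural assumption: the number of ALS sweeps needed to reach tolerance $\varepsilon$ (Tucker-ALS has no global convergence-rate guarantee, so the factor $|\log\varepsilon|$ relies on the empirically observed linear convergence), and the identification $r_\ell=O(|\log\varepsilon|)$ in the RHOSVD step, which requires at least geometric decay of the side-matrix singular values --- guaranteed for the long-range multi-particle potentials by \cref{cor:cheb_tuck_error_cp_long_range} but not in full generality. I would state both assumptions explicitly at their point of use; everything else is a routine operation count.
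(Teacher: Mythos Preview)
Your line-by-line bookkeeping matches the paper's proof almost exactly, with one substantive divergence: the origin of the $|\log\varepsilon|$ factor in the Tucker-ALS cost for \cref{alg:cheb_tuck_func}. The paper writes the Tucker-ALS cost as $C_2 m^4 + C_3 m^4 r$ and then invokes the assumption that the \emph{Tucker rank} satisfies $r=\mathcal{O}(|\log\varepsilon|)$ (a property of the target tensor, well documented for the Newton kernel and related functions). You instead attribute the $|\log\varepsilon|$ to the \emph{number of ALS sweeps} under a linear-convergence hypothesis. Both routes arrive at the same formula, but the paper's assumption is the standard one and is on firmer ground --- as you yourself note, there is no global convergence-rate guarantee for Tucker-ALS, so your version rests on a heuristic you cannot certify. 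Note also that you already use the rank interpretation $r_\ell=\mathcal{O}(|\log\varepsilon|)$ for the RHOSVD core-assembly cost in \cref{alg:cheb_tuck_alg_cp}; adopting it uniformly for \cref{alg:cheb_tuck_func} would make your argument consistent with the paper and remove the weakest link you flagged.

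Everything else --- the FFT-based DCT cost, the $\mathcal{O}(n)$ tridiagonal spline construction and $\mathcal{O}(1)$ per-point evaluation on a uniform grid, the $\mathcal{O}(mR\min\{m,R\})$ side-matrix SVDs, and the $\mathcal{O}(Rr^3)$ core reconstruction --- agrees with the paper's accounting.
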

\begin{proof}
  For the functional case \cref{alg:cheb_tuck_func}, the number of functional calls is $m^3$ and therefore the cost of evaluating ${\bf T}$ is $C_{f} m^3$. The cost of computing the CCT ${\bf C}$ is $C_1 m^3 \log m$ by the FFT-based Chebyshev coefficient computation. The cost of Tucker-ALS is $C_2 m^4 + C_3 m^4 r$ by the standard results on its complexity, see e.g. \cite[Chapter 3.4.3]{Khor-book_2018}, where the Tucker rank $r = \mathcal{O}(|\log \varepsilon|)$ is assumed. This assumption is true for a wide range of functions including Newton kernel in our numerical experiments \cite{khoromskij2007low}. This completes the proof for the functional case.

  For the algebraic cases, we first claim that the cost of evaluating a cubic spline interpolation at $m$ points given data on uniform $n$-point grid is $\mathcal{O}(m+n)$. Indeed, the computation of the spline coefficients requires solving a tridiagonal system of size $4n-4$, which costs $\mathcal{O}(n)$, and an evaluation of the spline at $m$ points costs $\mathcal{O}(m)$ in case the $n$-point grid is uniform. Similarly, the cost of trivariate cubic spline interpolation which is required in \cref{alg:cheb_tuck_alg} is $\mathcal{O}(m^3+n^3)$. This then completes the proof for the complexity of \cref{alg:cheb_tuck_alg}.

  Finally, for the complexity of \cref{alg:cheb_tuck_alg_cp}, the first two terms correspond to the cost of line~\ref{line:spline_eval}. The computation of the canonical vectors ${\bf c}_k^{(\ell)}$ in line~\ref{line:cheb_coeff_cp} requires $\mathcal{O}(m \log m)$ operations by using FFT, explaining the third term in the estimate. The complexity of the RHOSVD truncation is the sum of the costs of the truncated SVD of the side matrices, amounting to $\mathcal{O}(m R \min\{m,R\})$, and the cost of reconstruction of the Tucker core tensor $\boldsymbol{\beta}$, which is $\mathcal{O}(R r^3)$, see \cite{khoromskaia2022ubiquitous} for more details.
\end{proof}
We notice that there is the traditional trade-off between the accuracy and computational complexity of the numerical algorithms. With regard to this concern, we comment that given $\varepsilon >0$, the polynomial degree $m=m(\varepsilon)$
depends on the regularity of input function $f(x)$ and may increase
dramatically for non-regular functions having, for example,
multiple local singularities or cusps, see for example \cref{tab:single_newton_error}.
This is the case for interaction potentials of many-particle systems typically arising in quantum computational chemistry and bio-molecular modeling.
For such challenging problems, the successful application of our method is based on the use of range-separated (RS) tensor formats, which will be discussed in what follows.

\section{Applications to multi-particle modeling}
\label{sec:app_multi_part_mod}

In this section, we apply our hybrid tensor format to the challenging problem of approximating multi-particle interaction potentials. We consider the total potential generated by an $N$-particle system in $\mathbb{R}^3$, given by a weighted sum \cref{eq:total_potential} of a radially symmetric kernel $p(\|{\bf x}\|)$ centered at the position of each particle ${\bf x}_\nu$.
These kernels are typically slowly decaying and singular at the origin.
Examples include the Newton (Coulomb) $1/\|{\bf x}\|$, Slater $\e^{-\lambda \|{\bf x}\|}$, and Yukawa $\e^{-\lambda \|{\bf x}\|}/\|{\bf x}\|$ potentials,
as well as Mat\'ern covariance functions arising in various applications.
Such practically interesting kernels have local singularities or cusps at the particle centers, and at the same time possess a non-local behavior that is responsible for the global interaction between all the particles in the system.
Due to this short-long range behavior of the total collective potential $P({\bf x})$ it does not allow for a low-rank tensor approximation in traditional tensor formats with satisfactory accuracy. This difficulty can be resolved by applying the RS tensor format~\cite{benner2018range}.

In what follows, we will focus on the ubiquitous Newton kernel $p(\|{\bf x}\|) = 1/\|{\bf x}\|$ for $d=3$, arising in electronic structure calculations and in bio-molecular modeling. We first recall shortly the results on a sinc-quadrature based constructive CP tensor representation of a single Newton kernel discretized on a 3D Cartesian grid \cite{hackbusch2006low,bertoglio2012low}, thereby providing the CP tensor input for our mesh-free ChebTuck approximation \cref{alg:cheb_tuck_alg_cp}. We then introduce the idea of range-separation for efficient numerical treatment of the single Newton kernel \cite{benner2018range}. Finally, we discuss how the total potential \cref{eq:total_potential} can be constructed from the CP tensor representation of the single Newton kernel, for both bio-molecular modeling and lattice-structured compounds with vacancies, for which cases our ChebTuck approximation can be gainfully applied.

\subsection{ChebTuck approximation for rank-structured tensor representation of the Newton kernel}
\subsubsection{A low-rank CP tensor representation of the Newton kernel}

Here we recall the constructive low-rank representation of the Newton kernel based on sinc-quadrature, as developed in~\cite{hackbusch2006low,bertoglio2012low}, and refer to these works for the detailed error analysis and numerically optimized procedure for rank minimization.

Let ${\bf P} \in \R^{n_1\times n_2 \times n_3}$ be the projection-collocation tensor of the Newton kernel $p({\bf x}) = 1/\|{\bf x}\|$ discretized on a $n_1\times n_2\times n_3$ 3D uniform Cartesian grid in $[-1,1]^3$, where the entries are defined by (see \cite{hackbusch2006low} for more details)
\begin{equation}\label{eq:def_newton_tensor}
  {\bf P}_{{\bf i}}
  = \int_{[-1,1]^3} \dfrac{\psi_{{\bf i}}({\bf x})}{\|{\bf x}\|} \dd {\bf x} = \dfrac{1}{h_1 h_2 h_3}\int_{t_{i_1}^{(1)}}^{t_{i_1+1}^{(1)}} \int_{t_{i_2}^{(2)}}^{t_{i_2+1}^{(2)}} \int_{t_{i_3}^{(3)}}^{t_{i_3+1}^{(3)}} \dfrac{1}{\|{\bf x}\|} \dd x_1 \dd x_2 \dd x_3,
\end{equation}
with the multi-index  ${\bf i} = (i_1, i_2, i_3)$, where $t_{i_\ell}^{(\ell)} = -1 + (i_\ell-1) h_\ell $ are the grid points in each dimension and $h_\ell = 2/n_\ell$ is the grid size.
The basis function $\psi_{{\bf i}}$ is a product of piecewise constant functions in each dimension $\psi_{{\bf i}}({\bf x}) = \psi_{i_1}(x_1) \psi_{i_2}(x_2) \psi_{i_3}(x_3)$, where $\psi_{i_{\ell}}(x_{\ell}) = \chi_{[t_{i_{\ell}}^{(\ell)},t_{i_{\ell}+1}^{(\ell)})}(x_{\ell})/h_\ell$.
To ease notation, we assume that $n_1 = n_2 = n_3 = n$ in the following, and therefore use $h$ and $t_{i_\ell}$ to denote the grid size and grid points in all dimensions respectively. A numerically optimized procedure for rank minimization of the sinc-based CP approximation of ${\bf P}$ is then applied, leading to the final rank-$R$ CP format ${\bf P}_R$ of the collocation tensor ${\bf P}$
\begin{equation}\label{eq:newton_cp}
  {\bf P} \approx {\bf P}_R= {\sum}_{k = 1}^R
  {\bf p}_k^{(1)} \otimes {\bf p}_k^{(2)}\otimes {\bf p}_k^{(3)}
\end{equation}
where ${\bf p}_k^{(\ell)}$ are canonical vectors. We remark here that the canonical vectors in each dimension are the same due to the radial symmetry of the Newton kernel, i.e., ${\bf p}_k^{(1)} = {\bf p}_k^{(2)} = {\bf p}_k^{(3)} = {\bf p}_k$.  Note that here, the rank $R$ is adapted to the chosen approximation error and to the corresponding grid size $n$, and scales only logarithmically in both parameters. Similarly, the CP tensor decomposition can be constructed for a wide class of analytic kernels as described in~\cite{hackbusch2006low,bertoglio2012low}.
In \cref{fig:cp_vectors} left, we plot the canonical vectors ${\bf p}_k^{(1)}$ for $k = 1, \ldots, R$ of the CP approximation ${\bf P}_R$ of ${\bf P}$ discretized on a $n\times n\times n$ Cartesian grid for $n=256$, presented for example in \cite{Khor-book_2018,Khor_bookQC_2018}. In this case, $R = 26$ gives a relative error in the order of $10^{-7}$.
We can see that there are canonical vectors localized around the singularity at the origin as well as slowly decaying ones, which clearly corresponds to the long-range and short-range behavior of the Newton kernel.
We remark here that the number of vectors $R$ depends logarithmically on the size of the $ n\times n\times n$ 3D Cartesian grid and the parameter $\varepsilon>0$ controlling the accuracy of the sinc-approximation. For increasing grid size $n$, the number of vectors with localized support around the singularity at the origin becomes larger.
\captionsetup{belowskip=0pt}
\begin{figure}
  \centering
  \includegraphics[width=0.49\textwidth]{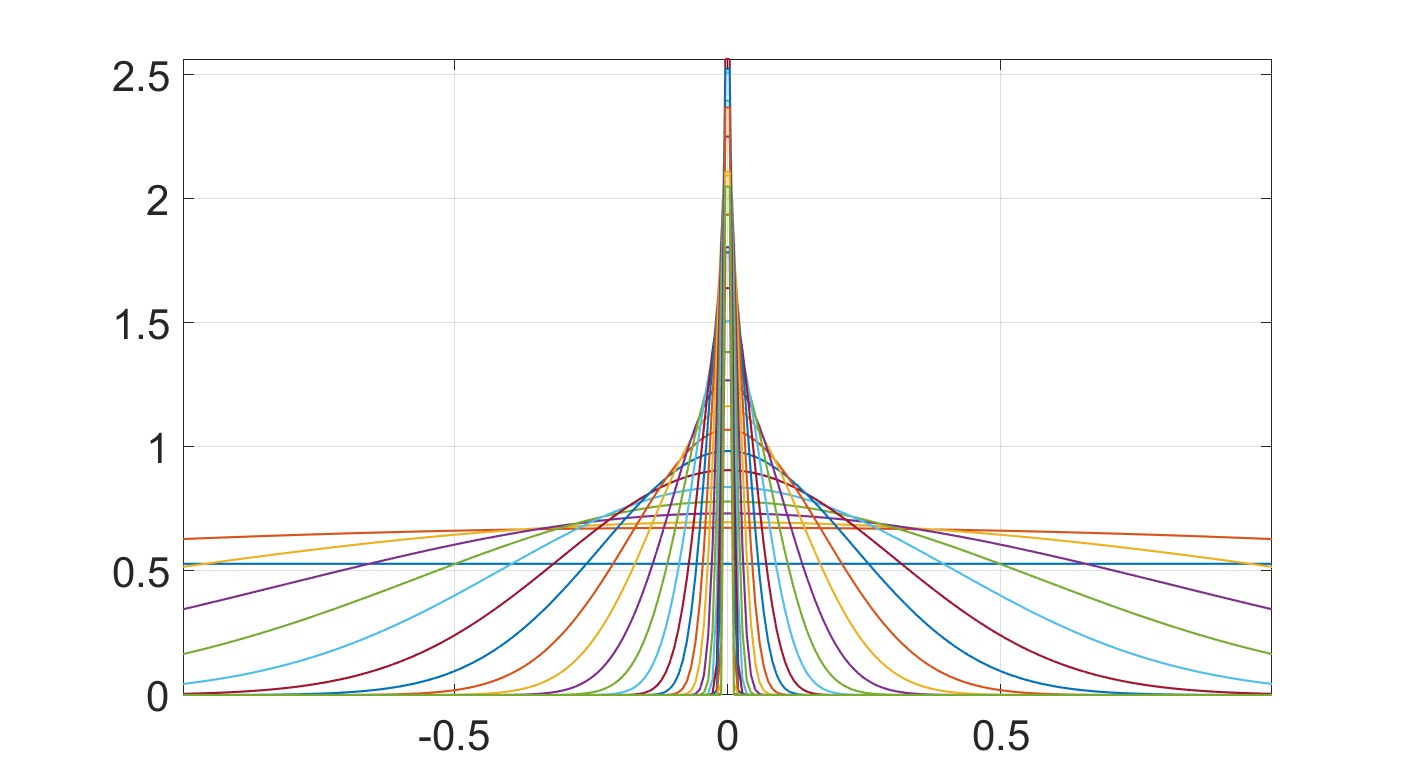}
  \includegraphics[width=0.49\textwidth]{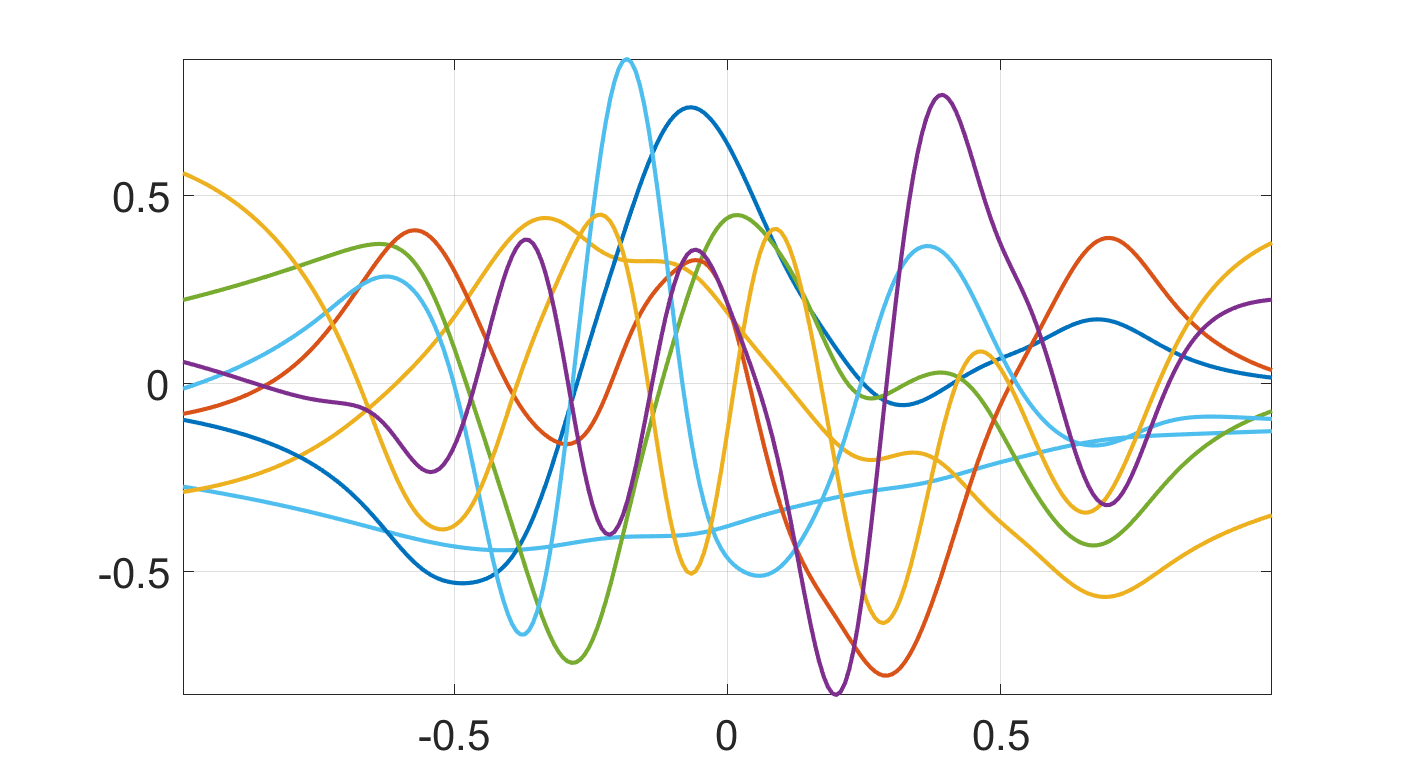}
  \caption{{\bf Left}: the canonical vectors ${\bf p}_k^{(1)}$ for $k = 1, \ldots, R$, $R=26$, along the $x$-axis for the rank-$R$ CP tensor approximation of collocation tensor of size ${\bf P}_R \in \R^{n \times n \times n}$ with $n = 256$.
  {\bf Right}: canonical vectors ${\bf a}_k^{(1)}$ of the long-range part in the total potential ${\bf A}_{R_l} \in \R^{n\times n \times n}$ of a biomolecular fragment (see \cref{sec:RS_tensor} for details) with $N=500$ particles and grid size $n=256$ for $k = 1, \ldots, 8$.
  }
  \label{fig:cp_vectors}
\end{figure}

This low-rank CP tensor representation of the Newton kernel has been successfully used by two of the coauthors in high accuracy electronic structure calculations
\cite{khoromskij2009multigrid,Khor_bookQC_2018}. In particular, it was applied for
tensor based computation of the Hartree potential (3D convolution
of the electron density with the Newton kernel), the nuclear
potential operators, and the two-electron integral tensor,
see \cite{Khor_bookQC_2018} and the references therein.
All computations have been performed using huge 3D
$n\times n\times n$ Cartesian grids with  $n$ of the order of
$10^4\sim 10^5$, providing accuracy close to the results from the analytically
based quantum chemical packages.

\subsubsection{Why Chebyshev interpolation fails to approximate \texorpdfstring{${\bf P}_R$}{PR} without range-separation}
\label{sec:cheb_tuck_fail}

In this subsection, we consider the ChebTuck approximation \cref{alg:cheb_tuck_alg_cp} applied to the CP tensor ${\bf P}_R$ of the singular Newton kernel and discuss why it fails to give satisfactory results, which necessitates the use of the range-separated (RS) tensor format \cite{benner2018range}.

First, we compute the ChebTuck approximation relative error of the CP tensor ${\bf P}_R$ at the middle slice for different grid size $n$ and Chebyshev degree $m$, i.e.,
\begin{equation}\label{eq:err_single_newton}
  \mathrm{err} := \max_{i_1,i_2} \left| {\bf P}_R(i_1, i_2, n/2) - \hat{f}_{\bf m}(t_{i_1}, t_{i_2}, t_{n/2}) \right| / \max_{i_1,i_2} |{\bf P}_R(i_1, i_2, n/2)|,
\end{equation}
as shown in \cref{tab:single_newton_error}. We can see that in this case, only when $m\gg n$ the ChebTuck approximation provides satisfactory results, which is not practical and does not reduce the storage cost.
\begin{table}
  \centering
  \begin{tabular}{ccccccccccc}
    \toprule
    \diagbox{$n$}{$m$} & 129  & 257  & 513    & 1025   & 2049   & 4097   & 8193   & 16385  \\
    \midrule
    256                & 0.41 & 0.07 & 7.7e-3 & 6.9e-4 & 2.0e-4 & 8.5e-6 & 1.6e-6 & 3.5e-7 \\
    512                & 0.71 & 0.41 & 0.07   & 7.7e-3 & 6.9e-4 & 2.0e-4 & 8.5e-6 & 1.6e-6 \\
    1024               & 0.92 & 0.71 & 0.41   & 0.07   & 7.7e-3 & 6.9e-4 & 2.0e-4 & 8.5e-6 \\
    2048               & 1.06 & 0.92 & 0.71   & 0.41   & 0.07   & 7.7e-3 & 6.9e-4 & 2.0e-4 \\
    4096               & 1.16 & 1.06 & 0.92   & 0.71   & 0.41   & 0.07   & 7.7e-3 & 6.9e-4 \\
    \bottomrule
  \end{tabular}
  \caption{Relative $\ell_\infty$ error in the middle slice of the ChebTuck approximation of the single Newton kernel as in \cref{eq:err_single_newton} for different grid size $n$ and Chebyshev degree $m$.}
  \label{tab:single_newton_error}
\end{table}
To have a closer look at the behavior, we take the collocation tensor of a single Newton kernel ${\bf P}_R \in \R^{n\times n\times n}$ with grid size $n = 8192$ as an example. In this case, we get a rank $R = 39$ CP approximation with relative error $10^{-7}$. We compute univariate Chebyshev interpolants of all canonical vectors ${\bf p}_k$ as described in \cref{ssec:alg_cp_error} of degree $m$, denoted by $g_k(x)$. For each canonical vector ${\bf p}_k$, we measure the Chebyshev approximation error at the uniform grid points, i.e.,
\begin{equation}\label{eq:err_single_newton_1d}
  \mathrm{err} := \max_{i} \left| {\bf p}_k(i) - g_k(t_i) \right|/ \max_{i} |{\bf p}_k(i)|,
\end{equation}
for different Chebyshev degree $m$. The results are shown in \cref{fig:error_vs_cheb_deg}. We can see that canonical vectors representing the long-range behavior of the Newton kernel, i.e., ${\bf p}_1, \ldots, {\bf p}_{15}$ can be well approximated by low-degree Chebyshev interpolants (left), while for short-range canonical vectors, the convergence is much slower (middle). This is as expected since the short-range canonical vectors have singularities at the origin and require high-degree polynomials to capture the behavior. In \cref{fig:error_vs_cheb_deg} right, we plot the Chebyshev approximation error of different canonical vectors for a fixed Chebyshev degree. We can see given a fixed degree $m=129$, the error of the short-range functions is much larger than for the long-range functions with a jump around $k=15$. The same behavior can also be seen from \cref{fig:ls_coefficients}, in which the absolute values of the Chebyshev coefficients of the interpolant of a long-range vector ${\bf p}_{10}$ and a short-range vector ${\bf p}_{30}$ are shown. We can see that for the long-range vector, the Chebyshev coefficients decay exponentially while for the short-range vector, the coefficients do not decay very much.

This motivates us to perform range-separation of the CP tensor ${\bf P}_R$ and use the range-separated tensor format \cite{benner2018range} first before applying the ChebTuck approximation, which will be discussed in the next subsection.
\begin{figure}
  \centering
  \includegraphics[width=0.32\textwidth]{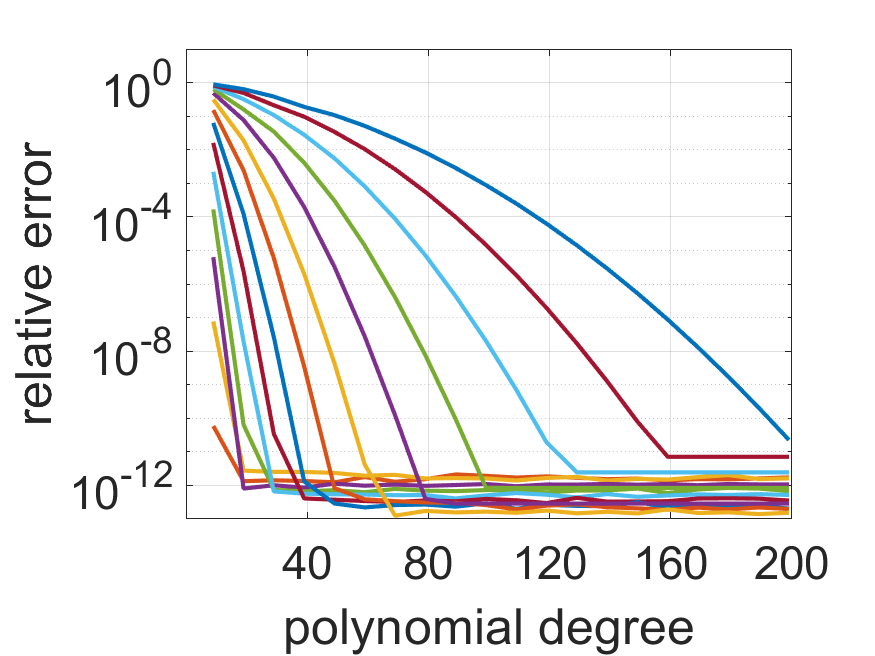}
  \includegraphics[width=0.32\textwidth]{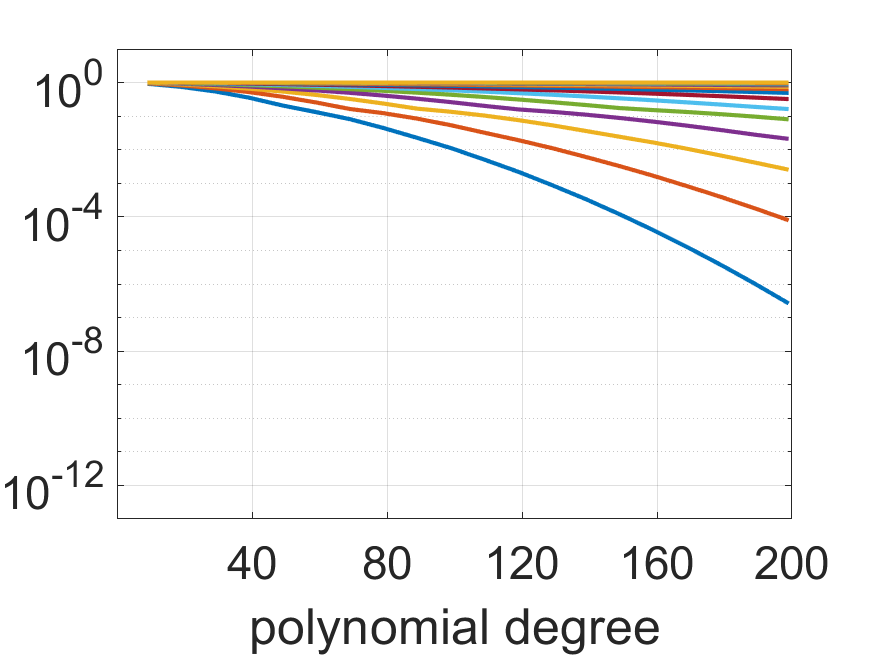}
  \includegraphics[width=0.32\textwidth]{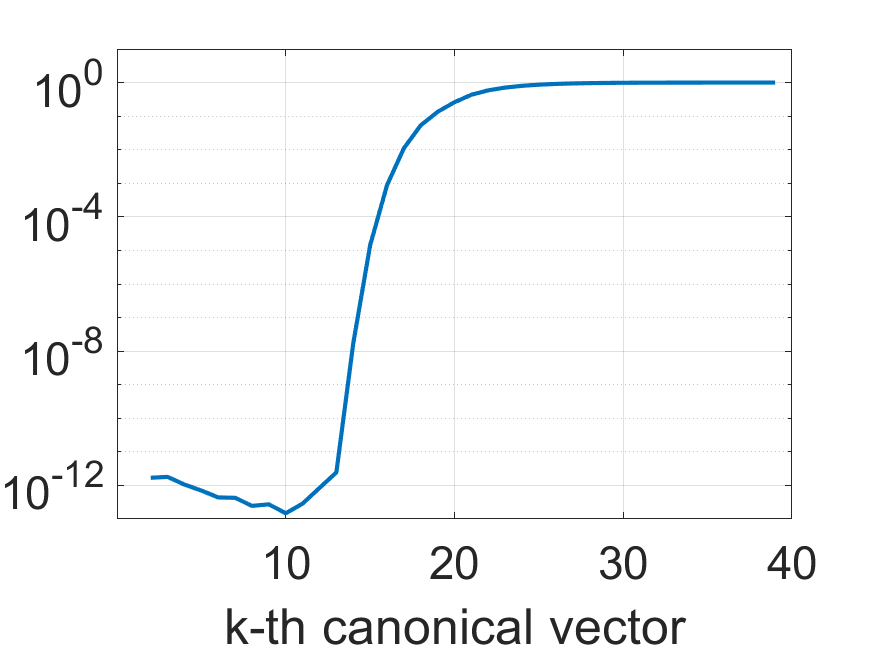}
  \caption{Relative $\ell_\infty$ error \cref{eq:err_single_newton_1d} of the Chebyshev interpolation of the canonical vectors vs. the degree of the Chebyshev interpolant. Each line represents the error of the Chebyshev interpolant of a canonical vector ${\bf p}_k$ for $k = 1, \ldots, 15$ ({\bf left}) and $k = 16, \ldots, 39$ ({\bf middle}). {\bf Right}: Relative $\ell_\infty$ error of the degree $m=129$ Chebyshev interpolation of the canonical vectors vs. the index $k$ of the univariate function.}
  \label{fig:error_vs_cheb_deg}
\end{figure}
\begin{figure}
  \centering
  \includegraphics[width=0.6\textwidth]{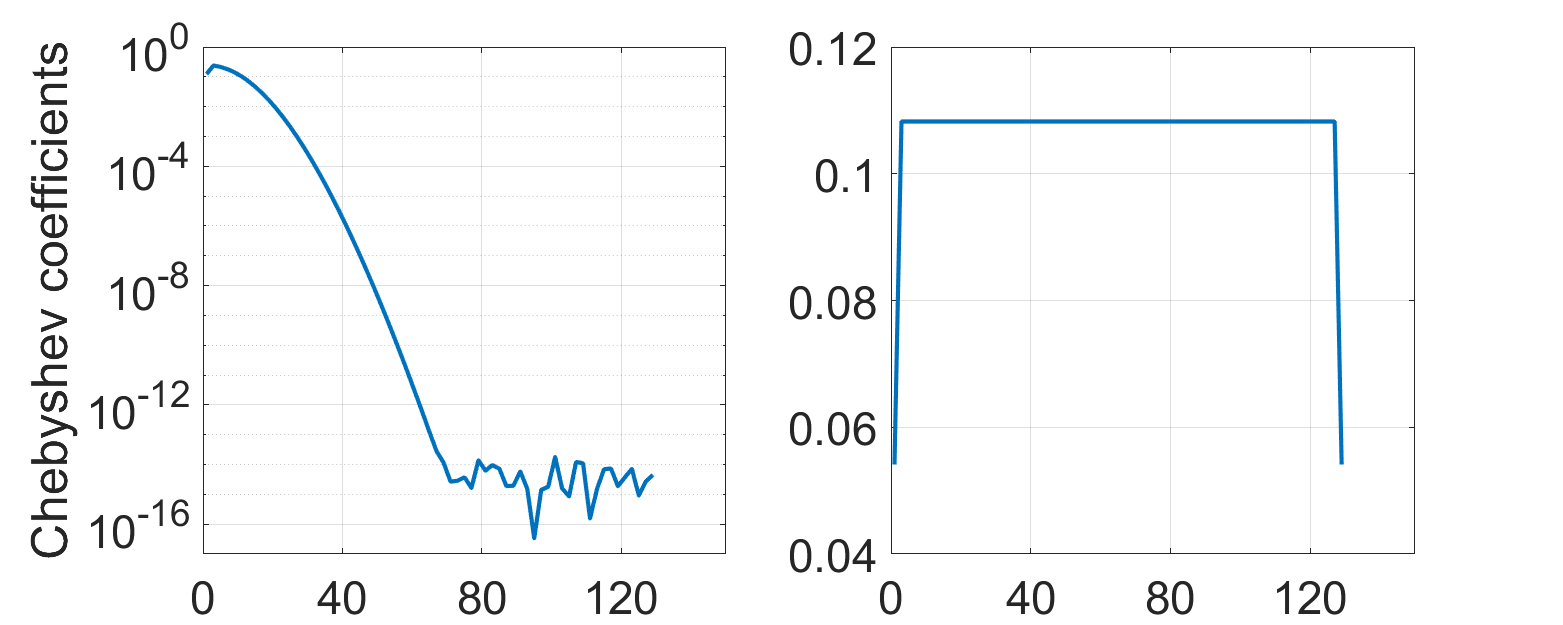}
  \caption{Absolute values of the Chebyshev coefficients of the degree-129 Chebyshev interpolant of ${\bf p}_{10}$ ({\bf left}) and ${\bf p}_{30}$ ({\bf right}). Only coefficients of even degree are shown, i.e., ${\bf c}_k(0), {\bf c}_k(2), \ldots, {\bf c}_k(128)$ since $p(\|{\bf x}\|)$ is even and ${\bf c}_k(1), \ldots, {\bf c}_k(129)= 0$}
  \label{fig:ls_coefficients}
\end{figure}

\subsubsection{Recalling the RS tensor format for \texorpdfstring{${\bf P}_R$}{PR}}\label{sec:rs_single_newton}
For a given appropriately chosen range-separation parameter $R_s \in [1, R] \cap \Z$ (see \cite{benner2018range} for details of the strategies for choosing $R_s$), we further use the
range-separated representation of the CP tensor ${\bf P}_R$  constructed in \cite{benner2018range} in the form ${\bf P}_R = {\bf P}_{R_s} + {\bf P}_{R_l}$, where
\begin{equation}\label{eqn:Split_Tens}
  {\bf P}_{R_s} = {\sum}_{k=1}^{R_s} {\bf p}_k \otimes {\bf p}_k \otimes {\bf p}_k, \  {\bf P}_{R_l} = {\sum}_{k=R_s+1}^{R} {\bf p}_k \otimes {\bf p}_k \otimes {\bf p}_k.
\end{equation}
Here, \cref{eqn:Split_Tens} represents the sums over
the sets of indexes for the long- and short-range canonical vectors in the range-separation splitting of the Newton kernel. Then the short-range part ${\bf P}_{R_s}$ is a highly localized tensor which has very small support and can thus be stored sparsely (see \cite{benner2018range} for details), while the ChebTuck approximation can be gainfully applied to the long-range part ${\bf P}_{R_l}$.
In the single-kernel experiments reported below, we use the splitting parameter $R_l = 16$ and hence $R_s = 23$ determined by the range separation principles described in \cite[Eqs 2.12 - 2.13]{benner2018range}. More detailed discussions on the choice of the range-separation parameter $R_s$ can be found in \cite{benner2018range}.

We apply \cref{alg:cheb_tuck_alg_cp} to the long-range part ${\bf P}_{R_l}$ and measure the errors of the approximation at the middle slice as in \cref{eq:err_single_newton} for different grid size $n$ and Chebyshev degree $m$, which are shown in \cref{tab:lr_single_newton_error}.
As opposed to \cref{tab:single_newton_error}, we can see that the ChebTuck approximation of the long-range part ${\bf P}_{R_l}$ can give satisfactory results already for reasonable Chebyshev degrees $m$.
We also show the optimal Tucker ranks, i.e., size of the core tensor $\boldsymbol{\beta}$ of the ChebTuck approximation of the long-range part ${\bf P}_{R_l}$ computed by RHOSVD with a relative error of $10^{-7}$ for different grid size $n$ and Chebyshev degree $m$ in the last column of \cref{tab:lr_single_newton_error}.
For fixed Chebyshev degree, the Tucker rank of the long-range part only increases logarithmically with the grid size $n$.
This indicates that the long-range part can be well approximated by a ChebTuck format with a small number of parameters.

\begin{remark}\label{rem:long_range-ChebTuck_vs_FMM_Newton}
As a side note, we remark that the numerical evidence in \cref{sec:cheb_tuck_fail,sec:rs_single_newton} aligns with the intuition underlying the fast multipole method (FMM) \cite{greengard1987fast}: the far-field (long-range) component of the potential admits an accurate approximation by low-degree polynomials.
\end{remark}
\begin{table}
  \centering
  \begin{tabular}{cccccccccc}
    \toprule
    \diagbox{$n$}{$m$} & 129    & 257     & 513     & 1025    & 2049    & 4097    & Tucker rank \\
    \midrule
    256                & 8.4e-8 & 7.4e-8  & 7.3e-8  & 4.9e-9  & 1.6e-9  & 1.8e-10 & 9 \\
    512                & 5.2e-8 & 5.2e-8  & 4.6e-8  & 4.6e-8  & 3.0e-9  & 9.9e-10 & 11 \\
    1024               & 9.4e-9 & 9.4e-9  & 9.4e-9  & 8.7e-9  & 8.6e-9  & 5.7e-10 & 12 \\
    2048               & 3.6e-9 & 1.7e-9  & 1.7e-9  & 1.7e-9  & 1.6e-9  & 1.6e-9  & 13 \\
    4096               & 1.1e-4 & 7.5e-10 & 7.4e-10 & 7.4e-10 & 7.4e-10 & 7.1e-10 & 15 \\
    \bottomrule
  \end{tabular}
  \caption{Long-range part of the Newton potential. Relative $\ell_\infty$ error as in \cref{eq:err_single_newton} and in the middle slice and the Tucker rank $\max_{\ell=1,2,3}{r_{i_\ell}}$ of the ChebTuck approximation of the long-range part ${\bf P}_{R_l}$ for different grid sizes $n$ and degree $m$. The Tucker rank is the same for different $m$ for a fixed grid size $n$, therefore only one value is shown.}
  \label{tab:lr_single_newton_error}
\end{table}
\subsection{Application of ChebTuck approximation to RS tensor format}\label{sec:RS_tensor}

In this section, we first recall the construction of the range-separated tensor format \cite{benner2018range} for the total potential \cref{eq:total_potential} in an $N$-particle system. Then we consider two practical examples, a bio-molecular system and a lattice-structured compound with vacancies, where we apply the ChebTuck approximation to the long-range part of the total potential.

The total potential of an $N$-particle system $P({\bf x})$ in \cref{eq:total_potential} is nothing but a weighted sum of shifted Newton kernels.
It was proven in \cite{khoromskaia2014grid} that the tensor representation of the collocation tensor of the total potential can also be obtained by shifting the collocation tensor of the Newton kernel, albeit through a more elaborate construction.
In fact, it can be obtained through the application of shifting and windowing operators to the reference potential in \cref{eq:newton_cp}, see~\cite{khoromskaia2014grid},
${\cal W}_{\nu}={\cal W}_{\nu}^{(1)}\otimes {\cal W}_{\nu}^{(2)}\otimes {\cal W}_{\nu}^{(3)}$, $\nu=1,\cdots,N$,
which applies to every particle located at ${\bf x}_\nu$ in an $N$-particle system.
Numerically it is obtained by construction of the single generating (reference) potential $\widetilde{\bf P}_R \in \R^{2n\times 2n \times 2n}$ in a twice larger computational box and shifting it according to the $x_{\mu,1}, x_{\mu,2}, x_{\mu,3} $ coordinates of every particle ${\bf x}_\mu$ with further restricting (windowing) it to the original computational domain.
Then the total multi-particle potential in \cref{eq:total_potential} is computed as a projected weighted sum of shifted and windowed potentials ${\bf P}_0 = \sum_{\nu=1}^{N} {z_\nu}\, {\cal W}_\nu (\widetilde{\bf P}_R)\in \mathbb{R}^{n\times n \times n}$, see also \cite{Khor_bookQC_2018} for more details.

It was proven in \cite{benner2018range} that a tensor representation of this type in case of a multi-particle system of general type has almost irreducible large rank, $R_N= O(NR)$.
To overcome this drawback, in \cite{benner2018range} a new RS tensor format was introduced based on separation of the long- and short-range skeleton vectors in the generating tensor $\widetilde{\bf P}_R = \widetilde{\bf P}_{R_s} + \widetilde{\bf P}_{R_l}$ similar to the single Newton kernel version as in \cref{sec:rs_single_newton} which leads to a separable representation for the total sum of potentials,
\[
  {\bf P}_0 =
  {\sum}_{\nu=1}^{N} {z_\nu} \, {\cal W}_\nu (\widetilde{\mathbf{P}}_{R_s} + \widetilde{\mathbf{P}}_{R_l})
    =: {\bf P}_s + {\bf P}_l.
\]
Thanks to the RS tensor format, the CP-rank of the part of the sum, $\mathbf{P}_{l}$, containing only the long-range components of the canonical vectors of the shifted kernels, scales as $R_l \in O(R \log N)$, i.e., logarithmically in the number of particles in a molecular system \cite{benner2018range}.
Such rank compression of the long-range part $\mathbf{P}_l$ is achieved by applying the RHOSVD method for the canonical to Tucker (C2T) transform, and the subsequent Tucker to canonical (T2C) decomposition \cite{khoromskij2009multigrid,khoromskij2007low}.
This yields the resulting canonical tensor $\mathbf{P}_{R_l}$, ${\mathbf{P}}_{l} \mapsto {\mathbf{P}}_{R_{l}}$ with the reduced rank $ R N \mapsto R_{l} \in O(R\log N)$.

One can aggregate the sum of short-range tensors ${\bf P}_s$ (having local supports which do not overlap each other) into the so called ``cumulated canonical tensors'', $\mathbf{P}_{s}$, parametrized only by the coordinates of particle centers and parameters of the localized reference tensor, $\widetilde{\mathbf{P}}_{R_s}$.
Above considerations resulted in the following definition (see~\cite{benner2018range} for more details).

\begin{definition}  \label{Def:RS-Can_format}
  (RS-canonical tensors \cite{benner2018range}).
  Given a reference tensor ${\bf A}_0$ supported by a small box such that $\mbox{rank}({\bf A}_0)\leq R_0$,
  the separation parameter $\gamma \in \mathbb{N}$, and a set of distinct points
  ${\bf x}_\nu \in \mathbb{R}^{d}$,
  $\nu=1,\ldots,N$,
  the {\bf RS-canonical tensor format} specifies the class of $d$-tensors
  ${\bf A}  \in \mathbb{R}^{n_1\times \cdots \times n_d}$,
  which can be represented as a sum of a rank-${R}_l$ canonical tensor ${\bf A}_{R_l}$ and a cumulated canonical tensor $\widehat{\bf A}_S$,
  \begin{equation}\label{eq:LR_tensor_sum}
    {\bf A}_{R_l} = {\sum}_{k =1}^{R_L} \xi_k {\bf a}_k^{(1)} \otimes \cdots \otimes {\bf a}_k^{(d)}, \ \widehat{\bf A}_S={\sum}_{\nu =1}^{N} c_\nu {\bf A}_\nu
    \in \mathbb{R}^{n_1\times \cdots \times n_d},
  \end{equation}
  where $\widehat{\bf A}_S$ is generated by replication of the reference tensor ${\bf A}_0$ to the
  points ${\bf x}_\nu$.
  Then the RS canonical tensor is represented in the form ${\bf A} =  {\bf A}_{R_l} + \widehat{\bf A}_S$, where $\mbox{diam}(\mbox{supp}{\bf A}_\nu)\leq 2 \gamma$ in the index size.
\end{definition}

In what follows, we consider biomolecular test systems corresponding to truncated fragments of the protein Fasciculin 1 with different numbers of atoms $N$. Fasciculin 1 is an anti-acetylcholinesterase toxin from the green mamba snake venom comprising 1,228 atoms \cite{PBE_RS:21, DuMaBoFo:92}.
In all biomolecular experiments in \cref{sec:bio_molecule,sec:logscaling}, $N$ denotes the number of atoms in the truncated fragment of the original molecule.
We also consider a lattice-structured compound with vacancies in \cref{sec:lattice}, which is a model system for crystalline solids.
We demonstrate similar accuracy control for the proposed mesh-free two-level tensor approximation applied to sums of Newton kernels for biomolecular fragments as well as for lattice-type compounds.

\begin{figure}[h]
  \centering
  \includegraphics[width=0.32\textwidth]{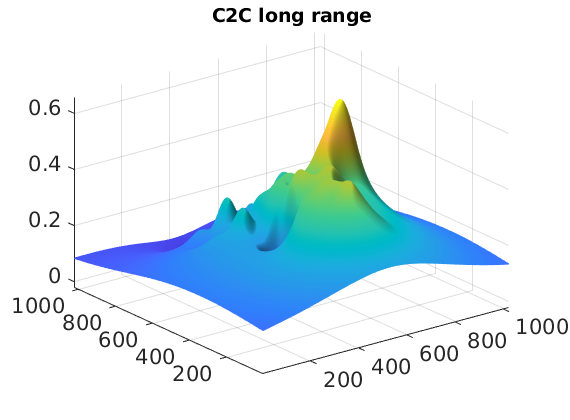}
  \includegraphics[width=0.32\textwidth]{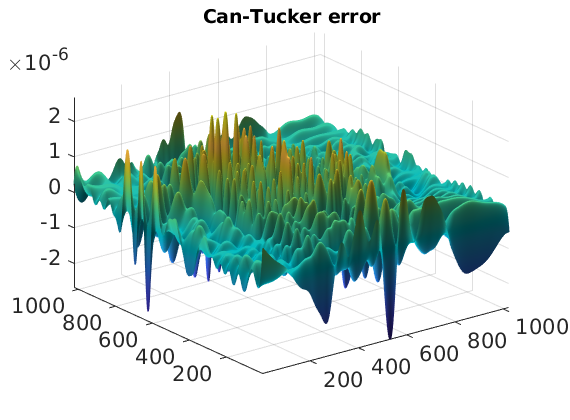}
  \includegraphics[width=0.32\textwidth]{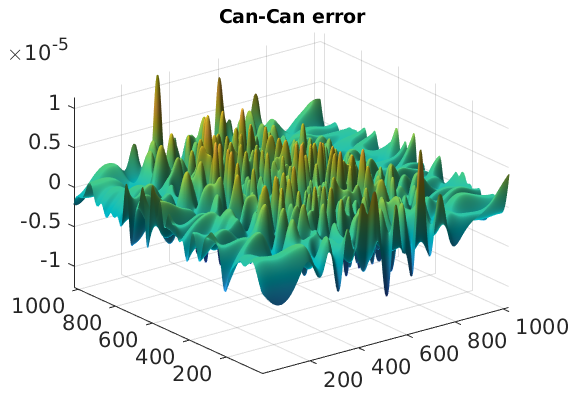}
  \caption{{\bf Left}: Middle slice ${\bf A}_{R_l}(:,:,n/2)$ of the (compressed) long-range part of the RS tensor for the electrostatic potential of
  a truncated Fasciculin 1 fragment with $N=500$ atoms computed on the
  3D $n\times n\times n$ Cartesian grid with $n=1024$ and rank $R_l=506$.
  {\bf Middle}: the error of the canonical-to-Tucker transform,
  with $\varepsilon=10^{-6} $. {\bf Right}: the total error of the
  canonical-to-canonical tensor transform with the $\varepsilon=10^{-5}$.
  }
  \label{fig:RS_tensor}
\end{figure}

\subsubsection{A truncated biomolecular fragment with \texorpdfstring{$N=500$}{N=500} atoms}\label{sec:bio_molecule}
In this section, we consider the ChebTuck approximation to the long-range part of a truncated Fasciculin 1 fragment with $N=500$ atoms. In this case, the underlying function we want to approximate is given by a 3-dimensional canonical tensor ${\bf A}_{R_l} \in \R^{n \times n \times n}$, which can be pre-computed for any given grid size $n$ as described in \cref{sec:RS_tensor}. In this case, \cref{alg:cheb_tuck_func} is not applicable but \cref{alg:cheb_tuck_alg,alg:cheb_tuck_alg_cp} can be used.

In \cref{fig:RS_tensor}, we visualize the middle slice for the long-range part of the RS tensor, see the left figure, representing the total electrostatic potential of the truncated Fasciculin 1 fragment with $N=500$ atoms, i.e., ${\bf A}_{R_l}$ of the $N$-particle potential in CP tensor format, with the reduced canonical rank $R_l=506 \sim R\log(N)$ as compared to the original canonical rank $R_N = 7000 \sim NR$ computed by the RHOSVD-based rank reduction method \cite{khoromskij2009multigrid}. We also show the error of the canonical-to-Tucker transform and of the consequent Tucker-to-canonical transform, ultimately canonical-to-canonical transform, in the middle and right figures in \cref{fig:RS_tensor}, respectively. We observe a good approximation error recovering the accuracy of rank truncation in tensor transforms.
In Figure \ref{fig:cp_vectors} right, we also visualize the several canonical vectors ${\bf a}_k^{(1)}$, $k=1, \ldots, 10 $, in the $x$-direction of the long-range part ${\bf A}_{R_l}$ of the same potential as in Figure \ref{fig:RS_tensor}.

We first consider the runtime and accuracy of \cref{alg:cheb_tuck_alg,alg:cheb_tuck_alg_cp} for different discretization grid sizes $n$. We set the Chebyshev degrees in each dimension to be $m_1 = m_2 = m_3 = m = 129$ and the Tucker-ALS (RHOSVD in case of \cref{alg:cheb_tuck_alg_cp}) truncation tolerance to be $10^{-6}$. We record the runtime and $\ell_\infty$ error at the middle slice for each case in \cref{tab:runtime_approx}. Note that since the trivariate cubic spline in \cref{alg:cheb_tuck_alg} is cubic in grid size $n$ and for each Chebyshev degree $m$ (see \cref{prop:compl}), we only compute the results for $n=256$.
These timings are for the ChebTuck approximation stage once the long-range RS tensor input is available.
We observe that the accuracy of the algorithms is similar while \cref{alg:cheb_tuck_alg_cp} is significantly more efficient as it leverages the CP tensor structure of the long-range part as well as the RHOSVD method for Tucker compression. This is consistent with the theoretical estimates in \cref{prop:compl}.
The ChebTuck format also becomes more accurate with increasing grid size $n$ which provides more information for the underlying function to be approximated.
\begin{table}
  \centering
  \begin{tabular}{ccccc}
    \toprule
             & \multicolumn{2}{c}{\cref{alg:cheb_tuck_alg}} & \multicolumn{2}{c}{\cref{alg:cheb_tuck_alg_cp}}                                   \\
    \midrule
             & $\ell_\infty$ error                          & runtime                                         & $\ell_\infty$ error   & runtime \\
    $n=256$  & $8.59 \cdot 10^{-7}$                         & 9.44                                           & $8.59 \cdot 10^{-7}$  & 0.07    \\
    $n=512$  & \diagbox[height=\line]{}{}                   & \diagbox[height=\line]{}{}                      & $6.89 \cdot 10^{-8}$  & 0.09    \\
    $n=1024$ & \diagbox[height=\line]{}{}                   & \diagbox[height=\line]{}{}                      & $4.21 \cdot 10^{-9}$  & 0.13    \\
    $n=2048$ & \diagbox[height=\line]{}{}                   & \diagbox[height=\line]{}{}                      & $6.37 \cdot 10^{-11}$ & 0.18    \\
    \bottomrule
  \end{tabular}
  \caption{Runtime in seconds and $\ell_\infty$ error for the ChebTuck approximation of the potential of a truncated Fasciculin 1 fragment with $N=500$ atoms in $\mathbb{R}^3$ for different grid sizes $n$.}
  \label{tab:runtime_approx}
\end{table}
Next, we visualize the ChebTuck approximation error as well as the RHOSVD compression error for the potential with $n=256$ as in \cref{fig:chebfun3h_error}. In the left figure, we show the middle slice of the approximated potential $\hat{f}_{\bf m}(:,:,t_{n/2}) \approx {\bf A}_{R_l}(:,:,n/2)$. Note that here we abuse the notation and use $\hat{f}_{\bf m}(:,:,t_{n/2})$ to denote the matrix containing function values of $\hat{f}_{\bf m}$ at the uniform grid points in the middle slice, i.e., $\hat{f}_{\bf m}(:,:,t_{n/2}) = [\hat{f}_{\bf m}(t_{i_1}, t_{i_2}, t_{n/2})]_{i_1 i_2}$. In the middle and right figures, we show the error during RHOSVD compressing and the total ChebTuck approximation error, i.e., $\hat{f}_{\bf m}(:,:,t_{n/2}) - \tilde{f}_{\bf m}(:,:,t_{n/2})$ and $\hat{f}_{\bf m}(:,:,t_{n/2}) - {\bf A}_{R_l}(:,:,n/2)$, respectively. Note the difference between $\hat{f}_{\bf m}$ and $\tilde{f}_{\bf m}$ as described in \cref{sec:ChebTuck}. We observe good approximation accuracy of our ChebTuck format. Another notable observation is that the error during RHOSVD compression is much smaller than the prescribed tolerance $\varepsilon = 10^{-7}$, which indicates that the singular values decay extremely fast after the first few significant ones. Similar figures are also obtained for larger grid sizes $n=512,1024,2048$ as shown in \cref{fig:chebfun3h_error_n}. Our ChebTuck format achieves even higher accuracy for larger grid sizes, consistent with the results in \cref{tab:runtime_approx}.

We also remark here that the choice of the univariate interpolations $q_k^{(\ell)}(x)$ in \cref{eq:cp_2_spline} is crucial for the accuracy of the ChebTuck approximation since the convergence rate of the Chebyshev interpolants $g_k^{(\ell)}(x)$ depends on the smoothness of the univariate interpolations $q_k^{(\ell)}(x)$, as discussed in \cref{lem:cheb_spline_error}. For example, in \cref{fig:chebfun3h_error_nn}, we replace the spline interpolation in \cref{eq:cp_2_spline} by piecewise linear (middle) and nearest neighbor (right) interpolation and observe that the ChebTuck format accuracy deteriorates significantly even for a large grid size $n=2048$.
\begin{figure}
  \centering
  \includegraphics[width=0.32\textwidth]{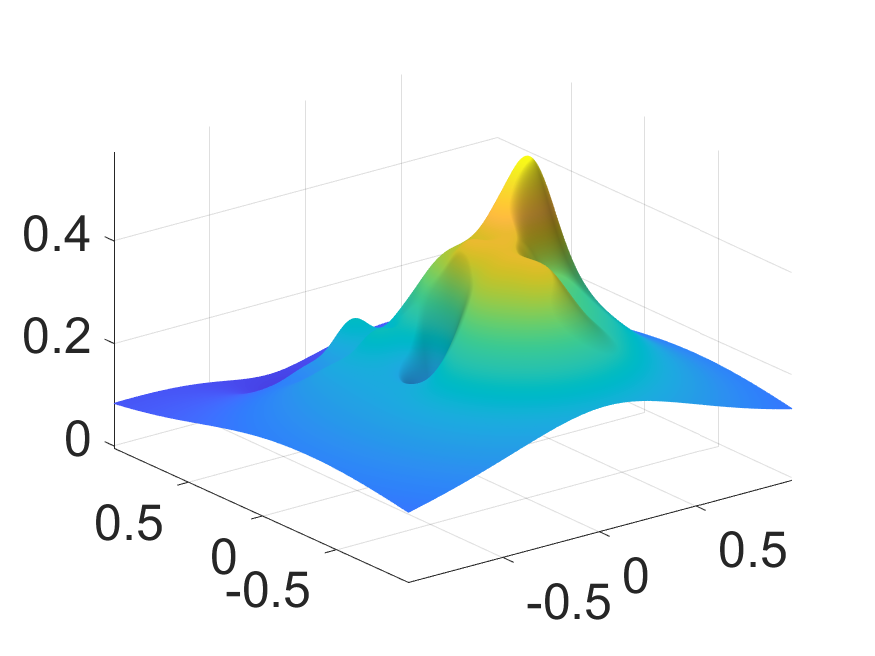}
  \includegraphics[width=0.32\textwidth]{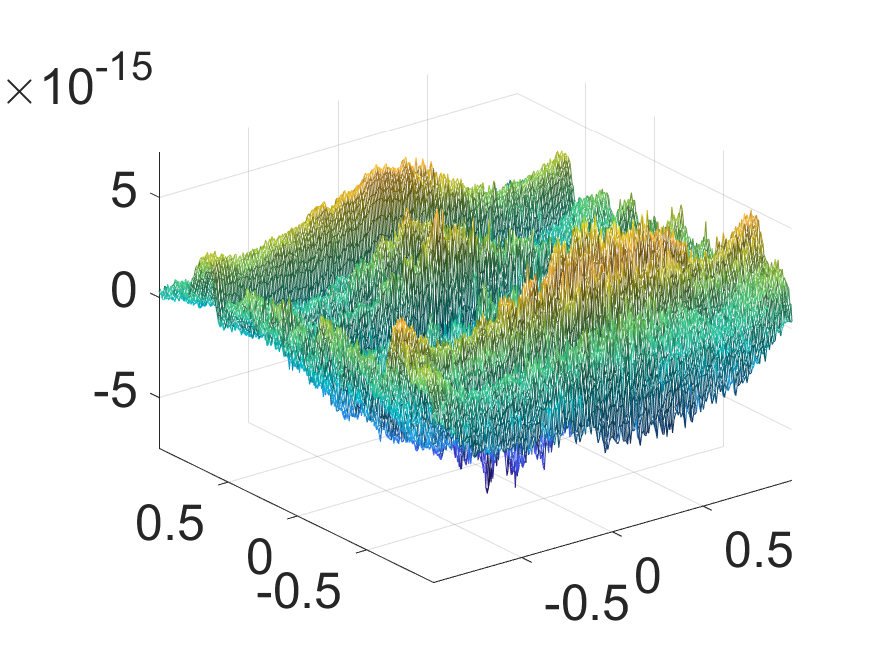}
  \includegraphics[width=0.32\textwidth]{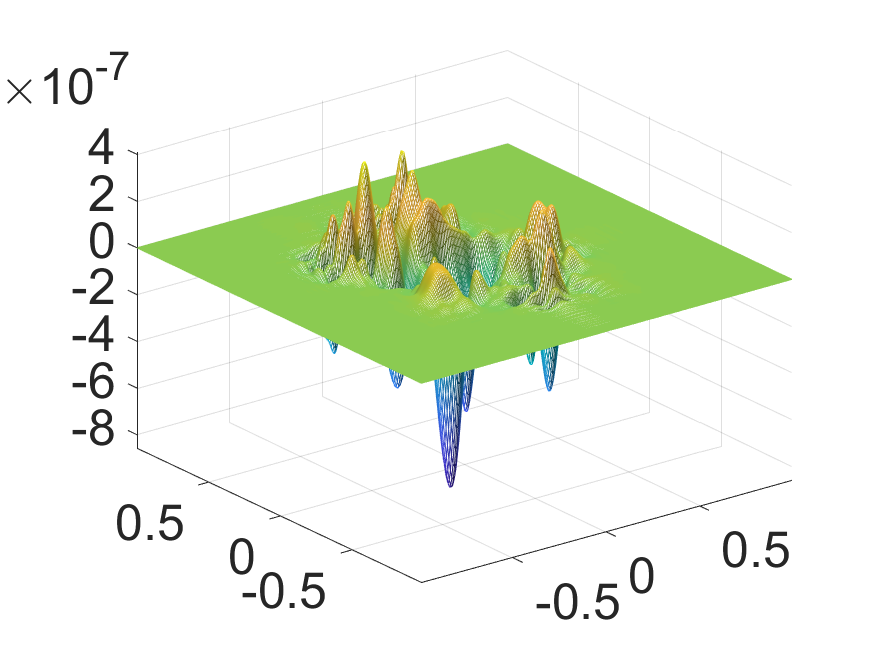}
  \caption{{\bf Left}: the middle slice $\hat{f}_{\bf m}(:,:,t_{n/2}) \approx {\bf A}_{R_l}(:,:,n/2)$ of the degree $m=129$ ChebTuck format of a truncated Fasciculin 1 fragment with $N=500$ and $n=256$. {\bf Middle}: the error during RHOSVD compression, i.e. $\hat{f}_{\bf m}(:,:,t_{n/2}) - \tilde{f}_{\bf m}(:,:,t_{n/2})$. {\bf Right}: the total error $\hat{f}_{\bf m}(:,:,t_{n/2}) - {\bf A}_{R_l}(:,:,n/2)$.
  }
  \label{fig:chebfun3h_error}
\end{figure}
\begin{figure}
  \centering
  \includegraphics[width=0.32\textwidth]{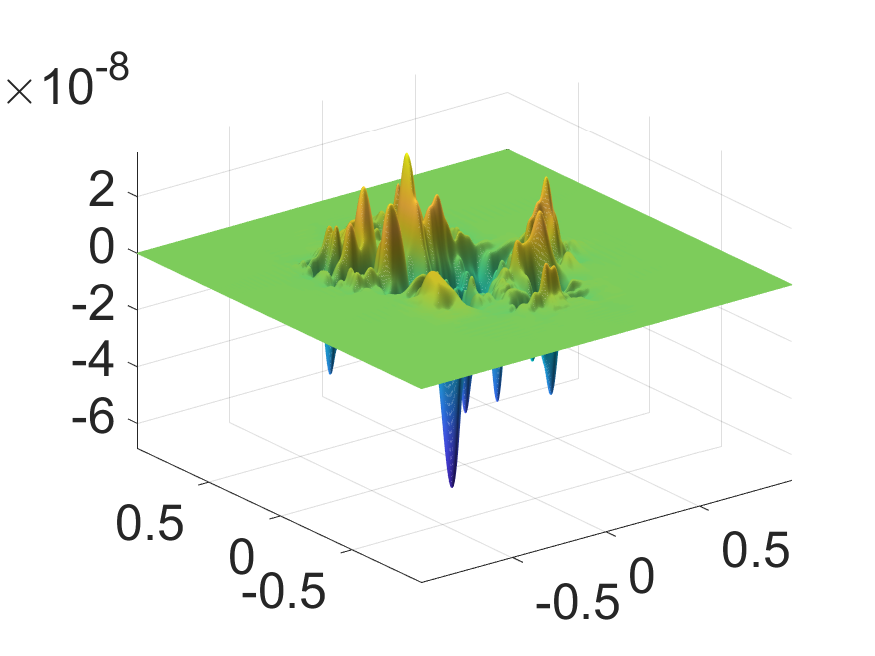}
  \includegraphics[width=0.32\textwidth]{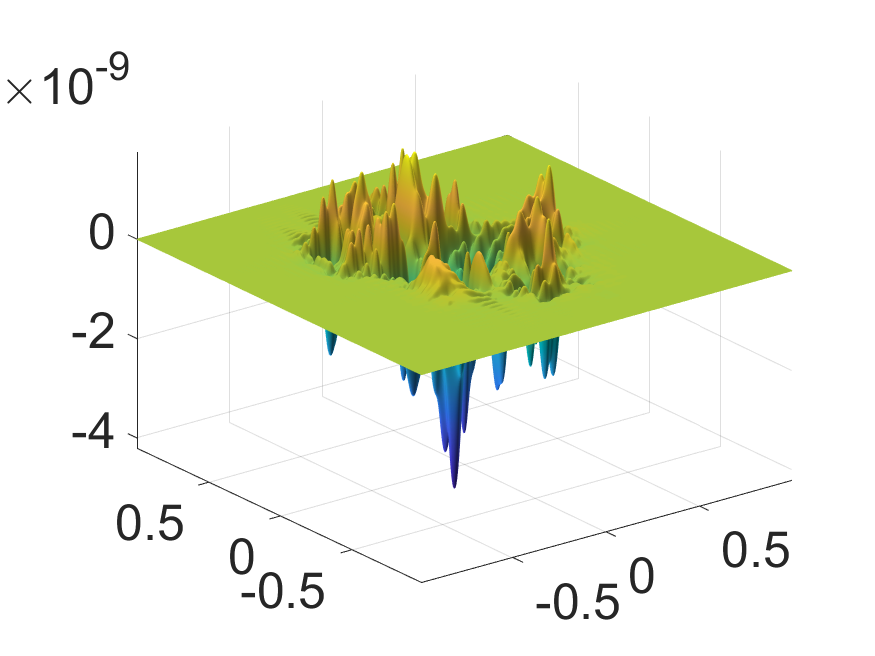}
  \includegraphics[width=0.32\textwidth]{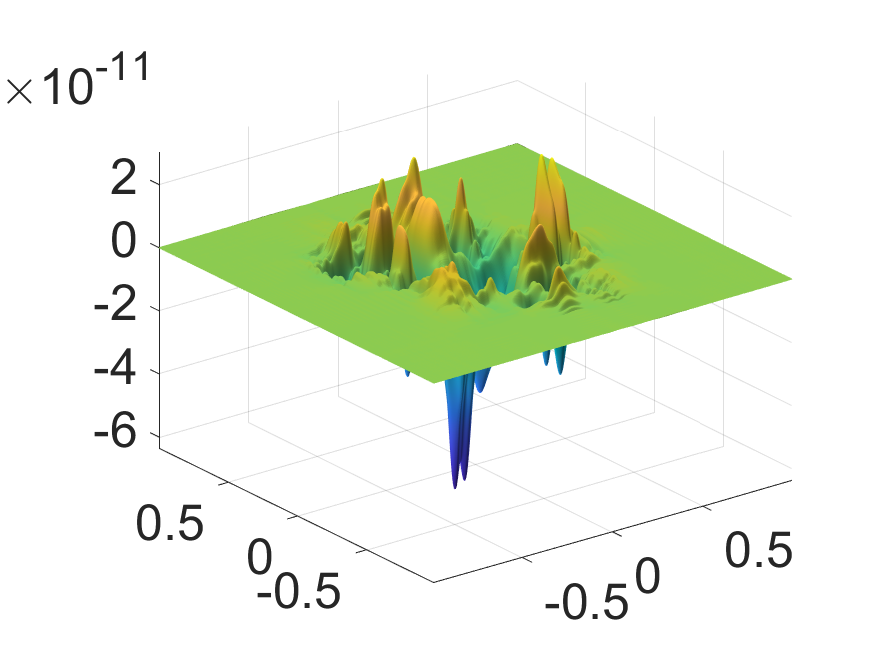}
  \caption{The degree $m=129$ ChebTuck approximation total error $\hat{f}_{\bf m}(:,:,t_{n/2}) - {\bf A}_{R_l}(:,:,n/2)$ of a truncated Fasciculin 1 fragment for grid sizes $n = 512, 1024, 2048$ from {\bf left} to {\bf right}.}
  \label{fig:chebfun3h_error_n}
\end{figure}
\begin{figure}[h]
  \centering
  \includegraphics[width=0.32\textwidth]{figures/Pn_total_err_n2048.png}
  \includegraphics[width=0.32\textwidth]{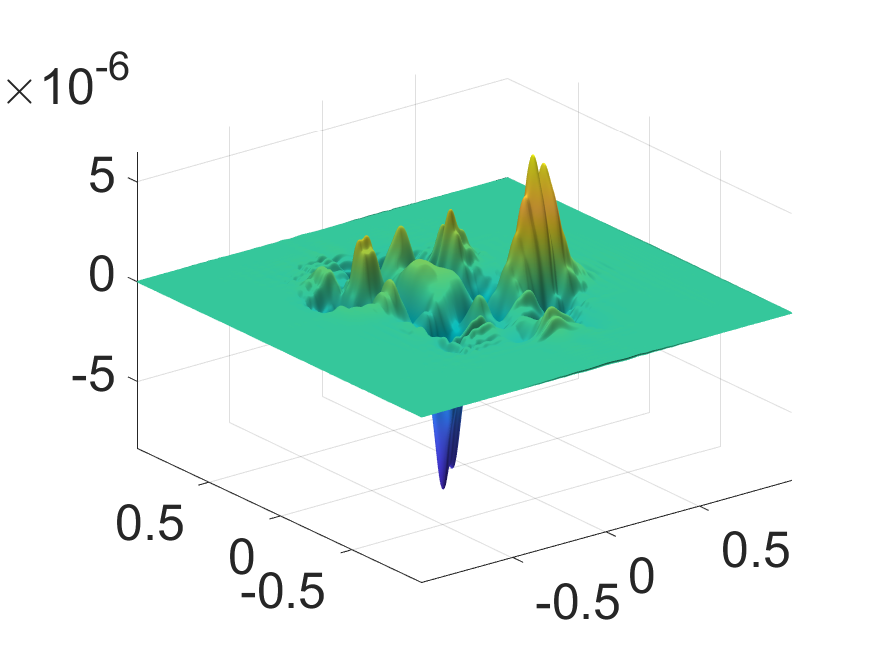}
  \includegraphics[width=0.32\textwidth]{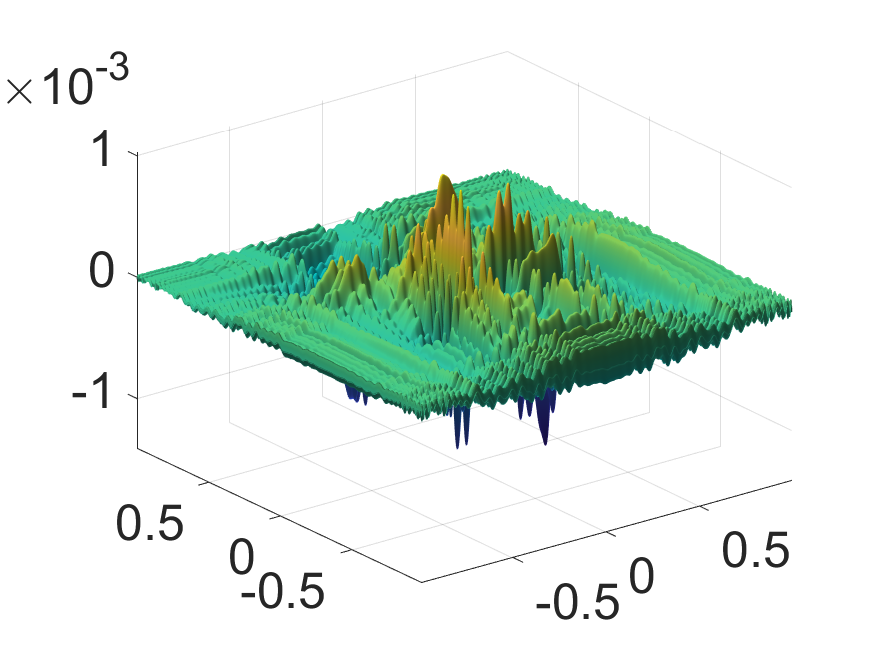}
  \caption{The total error $\hat{f}_{\bf m}(:,:,t_{n/2}) - {\bf A}_{R_l}(:,:,n/2)$ of the degree $m=129$ ChebTuck approximation of a truncated Fasciculin 1 fragment for grid size $n=2048$ with different univariate interpolations in \cref{eq:cp_2_spline}. {\bf Left}: cubic spline, {\bf middle}: piecewise linear, {\bf right}: nearest neighbor.}
  \label{fig:chebfun3h_error_nn}
\end{figure}

\subsubsection{\texorpdfstring{$O(\log N)$}{logN} scaling of the Tucker rank of the ChebTuck format}
\label{sec:logscaling}

In this section, we demonstrate numerically that the Tucker rank of our mesh-free ChebTuck format of the long-range part of the total potential for truncated Fasciculin 1 fragments scales logarithmically with the number of particles $N$, consistent with discussions in \cref{sec:cheb_tuck_error_cp_long_range} as well as the result for grid-based range-separated format as in \cite[Theorem 3.1]{benner2018range}. We consider fragment systems in $d=3$ dimensions with $N$ atoms discretized on a Cartesian grid with size $n = 2048$. For varying fragment size $N = 100, \ldots, 600$ and Chebyshev degree $m = 9, \ldots, 1025$, we show the Tucker rank of the ChebTuck format with $\varepsilon = 10^{-7}$ RHOSVD truncation tolerance in \cref{tab:tucker_rank_total}. We can see that the Tucker rank grows only mildly with respect to the number of atoms $N$. To have a closer look at the actual behavior, we also plot the singular values of the mode-$2$ side matrices of the Chebyshev coefficient tensor ${\bf C}$ (which is in CP format by construction) for $m = 129$ in \cref{fig:svals_CRS_n_256_cheb_deg_129}. We can see that the singular values drop to the level of machine precision after the first few significant ones, which also explains the behavior of the middle plot in \cref{fig:chebfun3h_error}.

\begin{table}[h]
  \centering
  \begin{tabular}{ccccccccc}
    \toprule
    \diagbox{$N$}{$m$} & 9 & 17 & 33 & 65 & 129 & 257 & 513 & 1025 \\
    \midrule
    100                & 9 & 17 & 26 & 26 & 26  & 26  & 26  & 26   \\
    200                & 9 & 17 & 30 & 30 & 30  & 30  & 30  & 30   \\
    300                & 9 & 17 & 31 & 32 & 32  & 32  & 32  & 32   \\
    400                & 9 & 17 & 31 & 32 & 32  & 32  & 32  & 32   \\
    500                & 9 & 17 & 31 & 32 & 32  & 32  & 32  & 32   \\
    600                & 9 & 17 & 31 & 33 & 33  & 33  & 33  & 33   \\
    \bottomrule
  \end{tabular}
  \caption{Tucker rank $\max_{\ell=1,2,3}{r_{\ell}}$ of the ChebTuck format with RHOSVD truncation tolerance $10^{-7}$ for the long-range part of the total potential in truncated Fasciculin 1 fragments with $n=2048$ for different particle numbers $N$ and Chebyshev degrees $m$.}
  \label{tab:tucker_rank_total}
\end{table}

This logarithmic-in-$N$ rank behavior is the key reason why the long-range stage can be represented by a moderate Chebyshev degree and a compact Tucker core, instead of carrying large full-grid objects into subsequent multilinear operations.
The dramatic compression benefit is illustrated in \cref{tab:compression_ratio}, which reports explicit parameter counts at the practical working degree $m=129$. In all tested particle ranges, ChebTuck uses only about $1.0\%$--$1.4\%$ of the CP parameters, i.e., about $98.6\%$--$99.0\%$ savings, while preserving high accuracy (error at the level of $10^{-7}$ as shown throughout these experiments).
From the same experiments, even for the much larger degree $m=1025$, the parameter savings remain above $90\%$, but $m=129$ is already sufficient for the target chemical-accuracy-level computations considered here.

\begin{table}[h]
  \centering
  \begin{tabular}{ccccc}
    \toprule
    $N$ & CP params & ChebTuck params & Compression & Savings \\
    \midrule
    100 & 2,064,384 & 20,238 & 1.0\% & 99.0\% \\
    200 & 2,506,752 & 29,040 & 1.2\% & 98.8\% \\
    300 & 2,586,624 & 35,031 & 1.4\% & 98.6\% \\
    400 & 2,875,392 & 38,234 & 1.3\% & 98.7\% \\
    500 & 3,121,152 & 40,540 & 1.3\% & 98.7\% \\
    600 & 3,348,480 & 45,054 & 1.3\% & 98.7\% \\
    \bottomrule
  \end{tabular}
  \caption{Parameter-count comparison between the CP input format ($n=2048$) and the ChebTuck representation ($m=129$) for truncated Fasciculin 1 long-range potentials. Compression is reported as $(\text{ChebTuck parameters})/(\text{CP parameters})$.}
  \label{tab:compression_ratio}
\end{table}

\begin{figure}[h]
  \centering
  \includegraphics[height=0.24\textwidth]{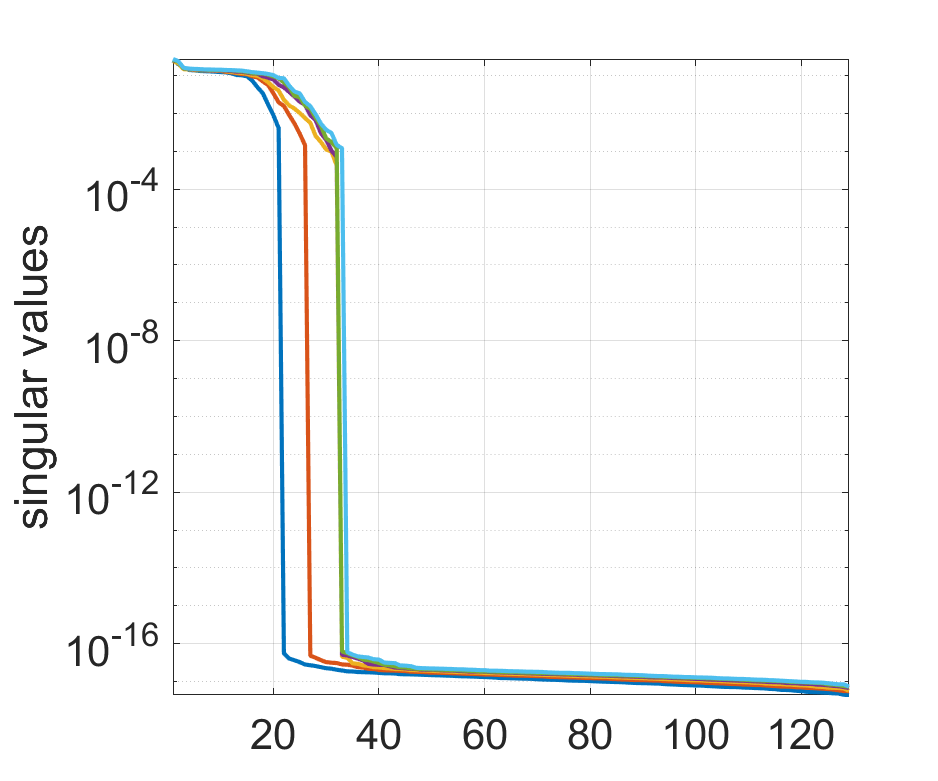}
  \includegraphics[height=0.24\textwidth]{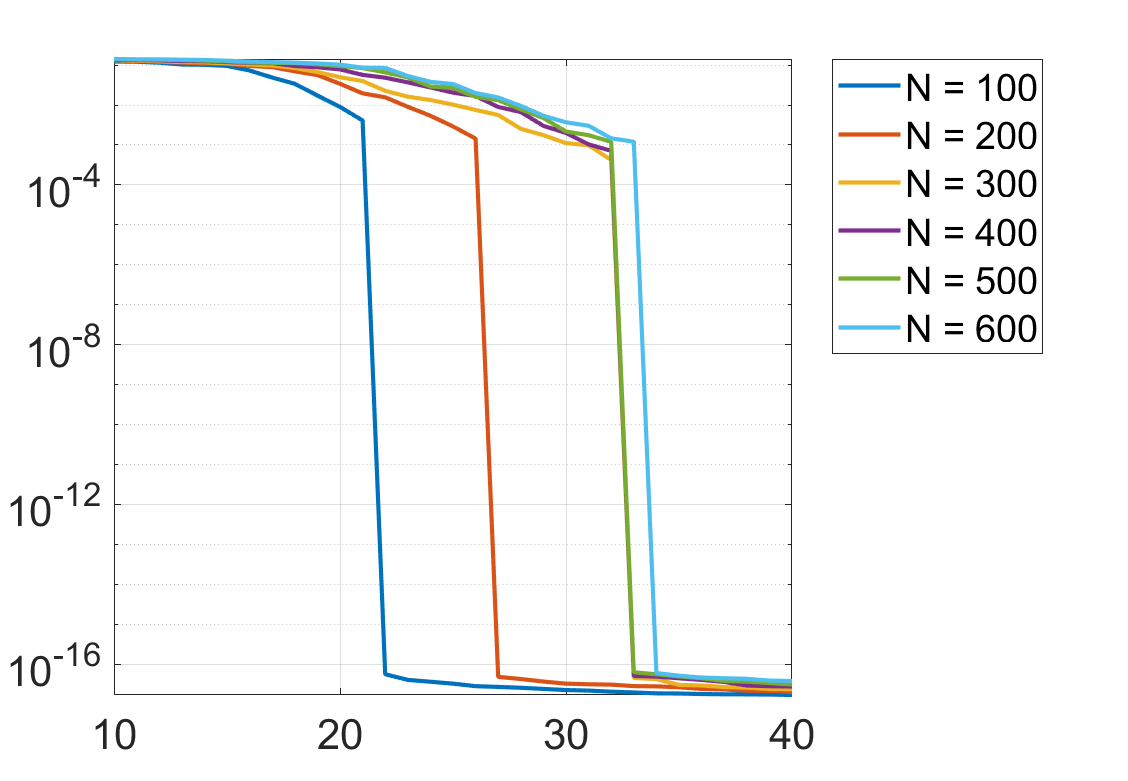}
  \caption{{\bf Left}: Singular values of the mode-2 side matrices of the Chebyshev coefficient tensor ${\bf C} \in \R^{m \times m \times m}$ with $m = 129$ for the long-range part of the total potential in truncated Fasciculin 1 fragments with $n=2048$. {\bf Right}: the zoom-in from the 10th to the 40th singular values of the same plot.}
  \label{fig:svals_CRS_n_256_cheb_deg_129}
\end{figure}

\begin{remark}\label{rem:long_range-ChebTuck_vs_FMM_MultPart}
Taking into account \cref{rem:long_range-ChebTuck_vs_FMM_Newton}, we notice that the numerical evidence in \cref{sec:cheb_tuck_fail,sec:rs_single_newton,sec:RS_tensor} indicates the efficiency of ChebTuck
approximation applied to the long-range component in the singular Newton potential, as well as in the multi-particle potential, both defined globally in the whole computational domain, while it fails if applied to the
complete singular potential, as expected. Here we realize that opposite to our approach,
in FMM-type methods the so-called ``long-range'' part of the potential is defined only on small geometric clusters located on some distance from the singularity (i.e., particle center).
This leads to, e.g. in BEM (boundary element method) and in the H-matrix techniques, the complicated patch-wise averaging of point charges based on rather cumbersome hierarchical clustering procedures of the computational domain, to be
implemented for each particle of the system, providing the approximation of total potential only at the centers of particles.
Our global ChebTuck approximation on the fine spatial grid can be further used in numerical PDE applications, for approximation of scattering data, as well as for efficient calculation of various physical parameters of multi-particle system.
\end{remark}

\subsubsection{A lattice-type compound with vacancies}\label{sec:lattice}

 Finally, we apply our hybrid tensor format ChebTuck to the problem
of low-parametric approximation of electrostatic potentials of large lattice-type structures, see
\cite{khoromskaia2014grid,Khor_bookQC_2018} for tensor-based approach to summation of electrostatic potentials on
orthogonal lattices.
Fast computation of lattice potentials was since longer an important topic of numerical analysis because of various applications
in crystal modeling, in quantum chemistry computations, in
polymer simulations and many other fields.
Traditional numerical methods for lattice summation are based on
classical fast Ewald summation method via transition of the problem
to the Fourier space \cite{Ewald:27,CanLe:13,HuMcCam:1999},
or, alternatively, by using the fast multipole method
\cite{greengard1987fast}.

Here we demonstrate our new approach in application to the long-range part of the many-particles lattice-structured electrostatic potential.
In this case, the charge centers ${\bf x}_v$ in \cref{eq:total_potential} are located on a regular grid.
For a lattice-type structure of size $24 \times 24 \times 4$ with $N = 2304$ particles, the canonical representation of its potential is given by a CP tensor of size $n\times n \times n$ with $n = 960$ and rank $R = 50$.
We apply \cref{alg:cheb_tuck_alg_cp} to approximate the long-range part of the potential with Chebyshev degrees $m_1 = m_2 = m_3 = m = 129$ and Tucker-ALS truncation tolerance $10^{-7}$. The results are shown in \cref{fig:lattice_error_long_range}. We observe that the error is of magnitude $10^{-6}$, which is consistent with the prescribed tolerance and the previous results on the biomolecular fragment examples.
The numerical results confirm the efficiency of the new mesh-less hybrid
tensor decomposition on the example of complicated multi-particle potential
arising in quantum chemistry, biomolecular modeling, etc.
Indeed, it demonstrates the same approximation error as for the
established grid-based tensor approximation method applied on relatively
large grids, required for the good resolution of complicated multi-particle potentials.

\begin{figure}[h]
  \centering
  \includegraphics[width=0.32\textwidth]{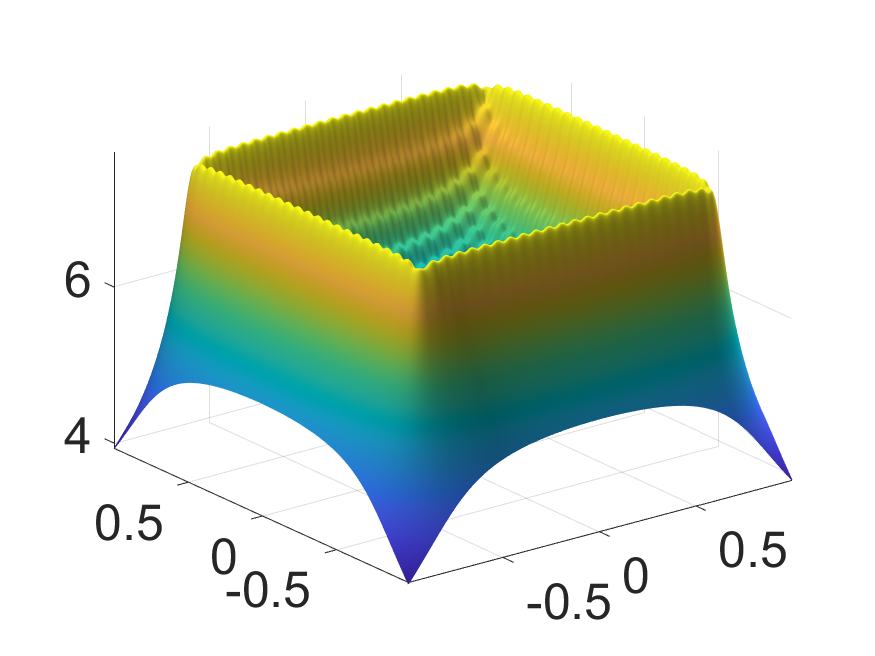}
  \includegraphics[width=0.32\textwidth]{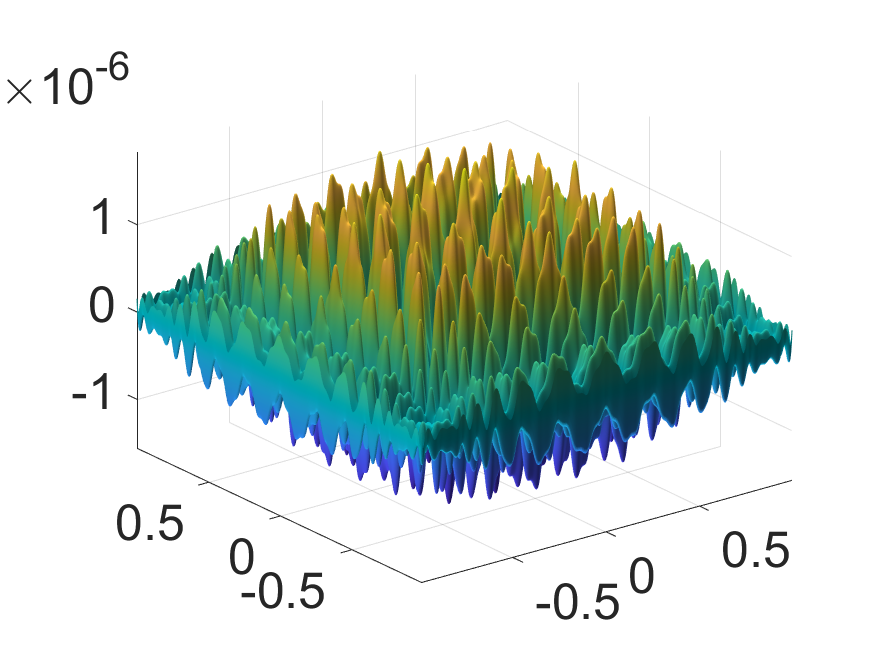}
  \includegraphics[width=0.32\textwidth]{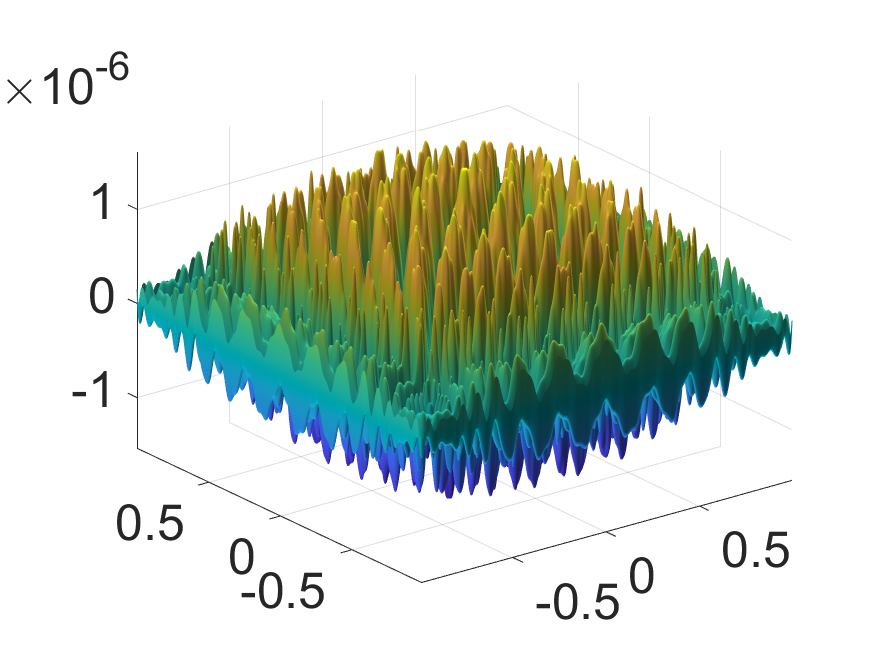}
  \caption{{\bf Left}: the middle slice $\hat{f}_{\bf m}(:,:,t_{n/2}) \approx {\bf A}_{R_l}(:,:,n/2)$ of the degree $m=129$ ChebTuck format for the long-range part of a lattice structure with $N=2304$ and $n=960$. {\bf Middle}: the error during RHOSVD compression, i.e. $\hat{f}_{\bf m}(:,:,t_{n/2}) - \tilde{f}_{\bf m}(:,:,t_{n/2})$. {\bf Right}: the total error $\hat{f}_{\bf m}(:,:,t_{n/2}) - {\bf A}_{R_l}(:,:,n/2)$.}
  \label{fig:lattice_error_long_range}
\end{figure}
Along the line of
\cref{rem:long_range-ChebTuck_vs_FMM_MultPart}
and \cref{rem:long_range-ChebTuck_vs_FMM_Newton}, here we observe that numerical illustrations in this section demonstrate that the proposed new technique achieves the same precision for calculated long-range lattice potential as the well established grid based tensor methods, but with lower computational and storage costs.
\section{Conclusions}\label{sec:conclude}

We introduce and analyze a mesh-free two-level hybrid Tucker tensor format
for approximation of trivariate functions on a hypercube with both high and reduced regularity, which combines the tensor product Chebyshev polynomial interpolation in the global computational domain
with the ALS-based Tucker decomposition of the Chebyshev coefficient tensor.
For functions with multiple cusps (or even stronger singularities), our method applies to the long-range component of the target function represented in low-rank RS tensor format.
The Tucker rank optimization via ALS-type rank-structured decomposition of the coefficient tensor leads to nearly optimal $\varepsilon$-rank Tucker decomposition of the initial function with merely the same rank parameter as for the alternative grid-based methods.
We present numerous numerical tests demonstrating the efficiency of the proposed techniques on the examples of many-particle electrostatic potentials in  bio-molecular systems and in lattice-structured compounds. In case of input functions presented in the canonical (CP) tensor format, our method needs only 1D-Chebyshev interpolations of the 1D-canonical vectors.

Finally, we notice that the presented techniques can be applied to the Green functions for various elliptic operators and can be extended to higher dimensions. In the latter case, the combination with cross-based tensor decompositions in TT, QTT or H-Tucker formats is necessary to avoid the curse of dimensionality.

\section*{Reproducibility statement}
The full source code and data to reproduce the numerical experiments is available at \url{https://github.com/bonans/ChebTuck}.
\bibliographystyle{siamplain}
\bibliography{references}
\end{document}